\newcommand{\sign}{\mathbf{sign}}
\newcommand{\be}{\begin{equation}}
\newcommand{\ee}{\end{equation}}
\newcommand{\restr}[1]{\lower0.4ex\hbox{$\vert$}\lower0.7ex\hbox{ $\!_{#1}$ }}
\newtheorem{theorem}{Theorem}[section]
\newtheorem{proposition}{Proposition}[section]
\newtheorem{corollary}[proposition]{Corollary}
\newtheorem{lemma}[proposition]{Lemma}
\newtheorem{remark}[proposition]{Remark}
 \title{Direct solution of piecewise linear systems}
 \author{ Manuel Radons}
\begin{document}
\maketitle

\textbf{Abstract:}
{\em
Let $S$ be a real $n\times n$ matrix, $z,\hat c\in \mathbb R^n$, and $\vert z\vert$ the componentwise modulus of $z$. 
Then the piecewise linear equation system $$z-S\vert z\vert = \hat c$$ is called an \textit{absolute value equation} (AVE). 
It has been proven to be equivalent to the general \textit{linear complementarity problem}, which means that it is NP hard in general.

We will show that for several system classes the AVE essentially retains the good natured solvability properties of regular linear systems. I.e., it can be solved directly by a slightly modified Gaussian elimination that we call the signed Gaussian elimination. For dense matrices $S$ this algorithm has the same operations count as the classical Gaussian elimination with symmetric pivoting. For tridiagonal systems in $n$ variables its computational cost is roughly that of sorting $n$ floating point numbers. The sharpness of the proposed restrictions on $S$ will be established.

   \em} 
   
   \textbf{Keywords} 
Absolute value equation; Linear complementarity problem; Piecewise linear equation system; Direct solver; Signed Gaussian elimination  \\

\textbf{MSC 2010} $15\mathrm A39,$ \quad  $65\mathrm K05,$ \quad  $90\mathrm C33$

\section{Introduction and notation}

We denote by $\operatorname{M}_n(\mathbb R)$ the space of $n\times n$ real matrices, and by $[n]$ the set $\{1,\dots,n\}$. 
For vectors and matrices \textit{absolute values and comparisons are used entrywise}. Zero vectors and matrices are denoted by $\mathbf 0$.

A \textit{signature matrix} $\Sigma$, or, briefly, \textit{a signature}, is a diagonal matrix with entries $+1$ or $-1$. The set of $n$-dimensional signature matrices is denoted by $\operatorname{diag}_{n,\sigma}$. A single diagonal entry of a signature is a sign $\sigma_i$ ($i\in [n]$).

Let $S\in \operatorname{M}_n(\mathbb R)$, $z,\hat c\in \mathbb R^n$. The \textit{piecewise linear equation system} (PLE)

\begin{align}\label{AVE_std}
 z - S\vert z\vert = \hat c 
\end{align}
is called an \textit{absolute value equation} (AVE). It was first introduced by Rohn in \cite{rohn1989interval}. Mangasarian proved its equivalence to the general \textit{linear complementarity problem} (LCP) \cite{mangasarian2006absval}. In \cite[pp. 216-230]{neumaier1990interval} Neumaier authored a detailed survey about its intimate connection to the research field of \textit{linear interval equations}. A recent result by Griewank and Streubel  has shown that PLEs of arbitrary structure can be, with a \textit{one-to-one solution correspondence}, transformed into an AVE \cite[Lem. 6.5]{griewank2014abs}. 

An especially closely related system type are equilibrium problems of the form 
\begin{align}\label{brugEq}
Ax+ \max (0, x)= b,
\end{align}
where $A\in \operatorname{M}_n(\mathbb R)$ and $x, b \in \mathbb R^n$. (A prominent example is the first hydrodynamic model presented in \cite{brugnano2008iterative}.) Using the identity $\max(s, t) = (s+t +\vert s-t \vert)/ 2$, equality \eqref{brugEq} can be reformulated as
\begin{align}\label{brugAVE}
Ax + \frac{x+\vert x\vert}{2}=b\quad \Longleftrightarrow \quad (2A+I)x +\vert x \vert \equiv Bx+\vert x\vert=2b.
\end{align}
For regular $B$, system \eqref{brugAVE} is clearly equivalent to \eqref{AVE_std}.

This position at the crossroads of several interesting problem areas gives relevance to the task of developing efficient solvers for the AVE. The latest publications on the matter include approaches by linear programming \cite{mangasarian2014abs} and concave minimization \cite{mangasarian2007concave}, as well as a variety of Newton and fixed point methods (see, e.g., \cite{brugnano2008iterative}, \cite{yuan2012iterative}, \cite{hu2011absval} or \cite{griewank2014abs}).  

Let $\Sigma \in \operatorname{diag}_{n,\sigma}$ s.t. $\Sigma z=\vert z\vert$. (Note that, since $0=+0=-0$, we need no "$0$"-sign.) Then we can rewrite \eqref{AVE_std} as 
\begin{align}\label{AVE}
  (I-S\Sigma)z = \hat c.
\end{align}
In this form it becomes apparent that the main difficulty in the computation of a solution for \eqref{AVE} is to determine the proper signature $\Sigma$ for $z$. That is, to determine in which of the $2^n$ orthants about the origin $z$ lies. This is NP-hard in general \cite{mangasarian2007absval}.

It was proven by Rump in \cite[Cor. 2.9]{rump1997theorems} that checking the system for unique solvability is NP-hard as well, 
as it is equivalent to checking whether a quantity called the \textit{sign-real spectral radius} of $S$ is smaller than one, which in turn is equivalent to checking whether the system matrix of the equivalent LCP is a \textit{$P$-matrix}. 
As these notions and results are fundamental to the understanding of the AVE, we will give a short account of them in the second section. There we will also see that the systems investigated in the present paper, for all of which it holds $\lVert S\lVert_{\infty} <1$, are uniquely solvable.  

The following simple observation is key to the subsequent discussion: 
\begin{proposition} Let $S\in \operatorname M_n(\mathbb R)$ and $z,\hat c\in\mathbb R^n$ such that they satisfy \eqref{AVE}. Then, if $\lVert S\lVert_{\infty}<1$, for at least one $i\in [n]$ the signs of $z_i$ and $\hat c_i$ have to coincide. 
\end{proposition}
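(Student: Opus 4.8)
The plan is to exploit the hypothesis $\lVert S\rVert_\infty<1$ by localizing the argument at the component of $z$ of largest modulus. Rewriting \eqref{AVE} in the original form $z-S\vert z\vert=\hat c$ and reading it off componentwise gives $\hat c_i = z_i-(S\vert z\vert)_i$ for every $i\in[n]$, so the question of matching signs becomes the question of whether the perturbation term $(S\vert z\vert)_i$ can be large enough to flip the sign of $z_i$. First I would dispose of the trivial case $z=\mathbf 0$: then $\hat c=\mathbf 0$ as well and the signs coincide vacuously, so from now on assume $z\neq\mathbf 0$ and hence $\lVert z\rVert_\infty>0$.

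Next I would fix an index $i$ with $\vert z_i\vert=\lVert z\rVert_\infty$, i.e.\ a component of maximal modulus, and estimate the perturbation there. Since absolute values and the operator norm interact in the expected way,
\begin{align*}
\bigl\vert (S\vert z\vert)_i\bigr\vert
=\Bigl\vert \sum_{j} S_{ij}\vert z_j\vert\Bigr\vert
\le \sum_{j}\vert S_{ij}\vert\,\vert z_j\vert
\le \Bigl(\sum_{j}\vert S_{ij}\vert\Bigr)\lVert z\rVert_\infty
\le \lVert S\rVert_\infty\,\lVert z\rVert_\infty .
\end{align*}
Because $\lVert S\rVert_\infty<1$ and $\lVert z\rVert_\infty=\vert z_i\vert>0$, this chain yields the strict bound $\vert (S\vert z\vert)_i\vert<\vert z_i\vert$. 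The choice of the maximal component is exactly what makes the row sum $\sum_j\vert S_{ij}\vert$ multiply $\lVert z\rVert_\infty$ rather than something smaller, so that the $\infty$-norm controls precisely this term.

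Finally I would conclude that a subtraction of a quantity strictly smaller in modulus than $z_i$ cannot change its sign: if $z_i>0$ then $\hat c_i=z_i-(S\vert z\vert)_i>z_i-\vert (S\vert z\vert)_i\vert>0$, and if $z_i<0$ the symmetric estimate gives $\hat c_i<0$, so $\sign(\hat c_i)=\sign(z_i)$ at this index $i$, which is what the proposition asserts. There is no serious obstacle here; the proof is essentially one well-chosen estimate. The only point that needs a little care is the treatment of boundary/zero cases and a precise reading of what ``signs coincide'' means (in particular that $\vert z_i\vert=\lVert z\rVert_\infty$ forces $z_i\neq0$ once $z\neq\mathbf 0$), so that the strict inequality $\vert(S\vert z\vert)_i\vert<\vert z_i\vert$ is genuinely available and the sign conclusion is unambiguous.
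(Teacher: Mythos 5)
Your proof is correct and follows essentially the same approach as the paper: pick an index $i$ where $\vert z_i\vert$ is maximal, handle the trivial case $z=\mathbf 0$, and use $\lVert S\rVert_\infty<1$ to get the strict bound $\vert (S\vert z\vert)_i\vert<\vert z_i\vert$, which forces $\hat c_i$ to adopt the sign of $z_i$. The only cosmetic remark is that in your final chain the first comparison should read $\hat c_i = z_i-(S\vert z\vert)_i \ge z_i-\vert (S\vert z\vert)_i\vert>0$ (with $\ge$ rather than $>$), which does not affect the conclusion.
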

\begin{proof}
Let $z_i$ be an entry of $z$ s.t. $\vert z_i\vert \ge \vert z_j\vert\ $ for all $j\in [n]$. If $z_i=0$, then $z=\mathbf 0$ and thus $\hat c\equiv z-S\vert z\vert$ is the zero vector as well -- and the statement holds trivially. If $\vert z_i\vert >0$, then $\left| e_i^TS\vert z\vert \right| < \vert z_i\vert$, due to the norm constraint on $S$. Thus, $\hat c_i = z_i - e_i^TS\vert z\vert$ will adopt the sign of $z_i$. 
\end{proof}

We do not know though, for which indices the signs coincide. In the third section we will derive several types of structural restrictions on $S$, each of which will guarantee the coincidence of the signs of $z_i$ and $\hat c_i$ for all $i\in [n]$ with $\vert \hat c_i\vert $ maximal in $\hat c$.  
   
In the fourth paragraph we will devise a modified Gaussian elimination that exploits this knowledge. This \textit{signed Gaussian elimination} (SGE) will base on the following central points:  
\begin{itemize}
\item We are enabled to perform one step of Gaussian elimination on the AVE in the form \eqref{AVE}, if we know the correct sign of $z_1$.
\item If $\lVert S\lVert_{\infty}<1$, no row or column pivot causes numerical instabilities in the performance of a Gaussian elimination step on system \eqref{AVE}. Hence, we can always produce a constellation, where $\vert \hat c_1\vert$ is maximal in $\hat c$.
\item The restrictions on $S$ developed in the third paragraph are invariant under Gaussian elimination steps.
\end{itemize}
For $S$ that conform to the restrictions derived in paragraph three, the first two points mean that we can always perform one Gaussian elimination step on system \eqref{AVE}. The third point ensures that we can repeat the procedure for the reduced system(s) and ultimately calculate the correct (unique) solution of the AVE.  

We will briefly analyze the modified algorithm's runtime in the dense and tridiagonal case. 
For a dense matrix $S$ the SGE has the operations count of a Gaussian elimination with symmetric pivoting. 
For the tridiagonal SGE the supplementary operations cost roughly as much as sorting $\hat c$ with respect to the absolute value of its entries. As the underlying tridiagonal Gaussian elimination, also known as the Thomas Algorithm, is in $\mathcal O(n)$, this means that the asymptotical complexity of the modified algorithm depends, at the current state of research, one-to-one on the implementation of the extra effort.  

The paper is concluded by a discussion of the sharpness of the proposed restrictions on $S$.  

For readers primarily interested in the algorithmic results, we remark that inequality \eqref{inftyBound}, equivalence $1.\Leftrightarrow 3.$ from Theorem \ref{unique}., and the statements of Theorem \ref{mainThm}. present the most basic preknowledge that should enable them to work with the fourth paragraph. 

Note that we already outlined the approach described above in \cite[Parag. 7]{griewank2014abs}. This paper presents the announced elaboration on the concept.

\section{Sign-real spectral radius}

Denote by $\rho (S)$ the spectral radius of $S$ and let 
\begin{align*}
\rho_0(S)\equiv \max \{\vert \lambda\vert: \lambda\ \text{\textit{real}\ eigenvalue of $S$}\}
\end{align*}
be the \textit{real spectral radius} of $S$. Then its sign-real spectral radius is defined as follows (see \cite[Def. 1.1]{rump1997theorems}): 
\begin{align*}
   \rho_0^s(S) \; \equiv \; \max \left \{ \rho_0(\Sigma S ) : \Sigma \in \operatorname{diag}_{n,\sigma}  \right \}. 
\end{align*}
The exponential number of signatures $\Sigma$ accounts for the NP-hardness of the computation of $\rho_0^s(S)$. 
It is easy to check that $\operatorname{diag}_{n,\sigma}$ is a finite subgroup of $\operatorname{Gl}_n(\mathbb R)$. Thus, for a fixed signature $\bar \Sigma$, the sets $\{\Sigma(\bar\Sigma S): \Sigma \in \operatorname{diag}_{n,\sigma}\}$ and $\{\Sigma S: \Sigma \in \operatorname{diag}_{n,\sigma}\}$ are identical modulo a permutation. Furthermore, since all $\Sigma \in  \operatorname{diag}_{n,\sigma}$ are obviously involutive, i.e., $\Sigma^{-1}=\Sigma$, the spectra of $S$ and $\Sigma S\Sigma$ are identical. These observations immediately yield the useful identity
\begin{align*}
\rho_0^s(S)=\rho_0^s(\Sigma_1 S)=\rho_0^s(S\Sigma_2)=\rho_0^s(\Sigma_1S\Sigma_2)\qquad \forall\ \Sigma_1, \Sigma_2 \in \operatorname{diag}_{n,\sigma}.
\end{align*} 
Recall that a real (or complex) square matrix is called a $P$-matrix if every principal minor is positive \cite[p. 147]{cottle1992lcp}. An LCP has a unique solution for all right hand sides if and only if its system matrix is a $P$-matrix \cite[p. 148, Thm. 3.3.7]{cottle1992lcp}. 
We will now (re-) prove some essential facts about the relation between $\rho_0^s(S)$ and the solvability properties of \eqref{AVE}.

\begin{theorem}\label{unique}
 Let $S\in\operatorname M_n(\mathbb R)$. Then the following are equivalent:
\begin{enumerate}
 \item $ \rho_0^s(S)\ <\ 1$.
 \item $(I-S)^{-1}(I+S)$ is a $P$-matrix.
 \item The system $(I-S\Sigma)z = \hat c$ has a unique solution for all $\hat c\in\mathbb R^n$.
 \item The function $\varphi:\ \mathbb R^n \rightarrow \mathbb R^n,\ z\rightarrow z+S\vert z\vert$ is bijective.
 \item For all $\hat c\in\mathbb R^n$ there exists a unique $\Sigma \in \operatorname{diag}_{n,\sigma}$ s.t. $b\equiv (I-S\Sigma)^{-1}\hat c$ lies in the orthant defined by $\Sigma$.
 \item $\operatorname{det}(I-S\Sigma)\ >\ 0$ for all $\Sigma\in \operatorname{diag}_{n,\sigma}$.
 \item $\operatorname{det}(I-SD)\ >\ 0$ for all real diagonal matrices $D\in\operatorname M_n(\mathbb R)$ with $\lVert D\lVert_{\infty}\le 1$.
 \end{enumerate}
\end{theorem}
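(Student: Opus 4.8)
The plan is to split the seven statements into an \emph{algebraic core} $1\Leftrightarrow 6\Leftrightarrow 7\Leftrightarrow 2$, phrased entirely through determinants and principal minors, and a \emph{solvability block} $3\Leftrightarrow 5\Leftrightarrow 4$, which I would attach to the core through the equivalent linear complementarity problem. I would prove the core as a short cycle $1\Rightarrow 6\Rightarrow 7\Rightarrow 1$, handle $2\Leftrightarrow 6$ by one explicit matrix identity, and then transfer unique solvability across the AVE--LCP correspondence, invoking the already cited fact that an LCP is uniquely solvable for every right hand side exactly when its matrix is a $P$-matrix.

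For $1\Rightarrow 6$ I would fix a signature $\Sigma$ and examine the polynomial $p(t)=\det(I-tS\Sigma)$ on $[0,1]$. Since $p(0)=1$, and a nonzero root $t$ forces $1/t$ to be a real eigenvalue of $S\Sigma$ (equivalently of $\Sigma S$, as $AB$ and $BA$ share their nonzero spectrum), the bound $\rho_0(\Sigma S)\le \rho_0^s(S)<1$ places every real root outside $[-1,1]$; continuity then yields $p(1)=\det(I-S\Sigma)>0$. The step $6\Rightarrow 7$ rests on the fact that $D=\operatorname{diag}(d_1,\dots,d_n)\mapsto \det(I-SD)$ is affine in each $d_i$, so its minimum over the box $\lVert D\rVert_{\infty}\le 1$ is attained at a vertex, i.e. at some $\Sigma\in\operatorname{diag}_{n,\sigma}$; positivity at all vertices thus propagates to the whole box, while $7\Rightarrow 6$ is trivial. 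For $7\Rightarrow 1$ I would argue contrapositively: a real eigenvalue $\mu$ of some $\Sigma_0 S$ with $|\mu|\ge 1$ produces the diagonal matrix $D_0=\mu^{-1}\Sigma_0$ with $\lVert D_0\rVert_{\infty}\le 1$ and $\det(I-SD_0)=0$, contradicting $7$.

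The equivalence $2\Leftrightarrow 6$ is the computation I would single out. A direct expansion gives $\tfrac{I+\Sigma}{2}+\tfrac{I-\Sigma}{2}M=(I-\Sigma S)(I-S)^{-1}$ for $M=(I-S)^{-1}(I+S)$, whence $\det\!\big(\tfrac{I+\Sigma}{2}+\tfrac{I-\Sigma}{2}M\big)=\det(I-S\Sigma)/\det(I-S)$; as the left member is exactly the principal minor of $M$ indexed by $\{i:\sigma_i=-1\}$, the $P$-matrix property of $M$ says precisely that all ratios $\det(I-S\Sigma)/\det(I-S)$ are positive, i.e. that the $\det(I-S\Sigma)$ share a single sign. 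To pin that common sign to $+$ I would again use multilinearity: the average of $\det(I-S\Sigma)$ over $\operatorname{diag}_{n,\sigma}$ equals its value $\det(I)=1$ at $D=\mathbf 0$, so a family of one sign with positive mean must be positive, giving $6$; conversely $6\Rightarrow 2$ is immediate since $\det(I-S)>0$.

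To reach $3,4,5$ I would use $\det(I-S)>0$ (from $6$) to rewrite the AVE $z-S\vert z\vert=\hat c$, via $z=z_+-z_-$, as the complementarity system $z_+=Mz_-+q$, $z_\pm\ge\mathbf 0$, $z_+^Tz_-=0$ with $q=(I-S)^{-1}\hat c$; since $\hat c\mapsto q$ is a bijection of $\R^n$ and $z\leftrightarrow(z_+,z_-)$ is one-to-one under complementarity, unique solvability of the AVE for every $\hat c$ matches unique solvability of the LCP for every $q$, which by the cited theorem is the $P$-matrix property $2$. Statement $5$ I would treat as a reformulation: modulo the sign convention for zero entries, each solution $z$ lies in the orthant of its own signature $\Sigma$ and then equals $(I-S\Sigma)^{-1}\hat c$, so solutions correspond to signatures $\Sigma$ with $(I-S\Sigma)^{-1}\hat c$ in orthant $\Sigma$. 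Finally $4$, bijectivity of $z\mapsto z+S\vert z\vert$, is statement $3$ for the matrix $-S$, and because $\rho_0^s(-S)=\rho_0^s(S)$ it follows by applying the already established equivalence $1\Leftrightarrow 3$ to $-S$. I expect the main obstacle to be this solvability transfer: one must verify that the AVE--LCP dictionary preserves \emph{existence as well as uniqueness} and is genuinely surjective on right hand sides, so that the universal quantifiers align. The purely determinantal equivalences, by contrast, should be routine once the identity for $M$ is in hand; a self-contained alternative for existence — showing directly that the coherently oriented piecewise linear map $z\mapsto z-S\vert z\vert$ is a proper local homeomorphism, hence bijective — is available but technically heavier, which is why I would favour the complementarity route.
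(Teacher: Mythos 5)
Your proposal is correct in its essentials, but it takes a genuinely different route from the paper in two substantive places, both to good effect. Where the paper links the $P$-matrix condition to the determinantal ones by citing Johnson--Tsatsomeros ($TA+(I-T)B$ regular for all diagonal $T$ with entries in $[0,1]$ iff $A^{-1}B$ is a $P$-matrix), you prove $2\Leftrightarrow 6$ self-containedly: your identity is valid, since $M(I-S)=(I-S)^{-1}(I+S)(I-S)=I+S$ gives $\bigl(\tfrac{I+\Sigma}{2}+\tfrac{I-\Sigma}{2}M\bigr)(I-S)=I-\Sigma S$, so every principal minor of $M$ is a ratio $\det(I-\Sigma S)/\det(I-S)$, which equals your $\det(I-S\Sigma)/\det(I-S)$ by the similarity $I-\Sigma S=\Sigma(I-S\Sigma)\Sigma$; and your averaging argument (a multi-affine function's mean over the vertices of the cube equals its value $\det(I)=1$ at the center) correctly pins the common sign to $+$. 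More strikingly, where the paper proves $6\Rightarrow 4$ by piecewise linear topology --- coherent orientation implies surjectivity, stably coherently oriented simply switched maps are injective, with citations to Scholtes and Griewank--Streubel --- you dispose of statement 4 by the symmetry $\rho_0^s(-S)=\rho_0^s(S)$: bijectivity of $z\mapsto z+S|z|$ is statement 3 for $-S$, so it follows from the already established, matrix-independent equivalence $1\Leftrightarrow 3$ applied to $-S$. This is considerably more elementary and not circular; what the paper's heavier route buys is exactly what its remark advertises, a demonstration of recent piecewise linear theory, which is an expository rather than logical choice. Your core cycle $1\Rightarrow 6\Rightarrow 7\Rightarrow 1$ and the blocks $2\Leftrightarrow 3$ (AVE--LCP substitution plus the Cottle--Pang--Stone characterization) and $3\Leftrightarrow 5$ match the paper's proof up to cosmetic differences (your homotopy $p(t)=\det(I-tS\Sigma)$ versus the paper's eigenvalue-shift argument; your vertex-minimum argument versus the paper's inductive rank-one interpolation).

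One caveat, which your proof shares with the paper's: in the direction $3\Rightarrow 2$ the dictionary $z_+=Mz_-+q$ presupposes that $I-S$ is invertible, and you say you would take $\det(I-S)>0$ ``from 6'' --- but 6 is not available when 3 is the hypothesis. The gap is easily closed: on the nonnegative orthant the map $z\mapsto z-S|z|$ coincides with the linear map $I-S$, so injectivity granted by 3 on a set with nonempty interior forces $I-S$ to be nonsingular; after that your transfer argument, whose quantifier alignment you rightly single out as the point needing care, goes through verbatim.
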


\begin{proof}
Let $\mu, \lambda \in \mathbb R$. We note that, since $\operatorname{det}((A+\mu I) - \lambda I) = \operatorname{det}(A -(\lambda -\mu)I)$, the spectrum of a matrix $\bar A\equiv A+\mu I$ is the spectrum of $A$ shifted by $\mu$ along the real axis. We will refer to this fact by the abbreviation SHIFT.

$1.\Rightarrow 6.:$ Let $ \rho_0^s(S)<1$ and fix a signature $\Sigma$. Then the absolute value of all real eigenvalues of $S\Sigma$ is smaller than one. Thus, by SHIFT, all real eigenvalues of $(I-S\Sigma)$ lie in the open interval $(0,2)$, which means that their product is positive. The complex eigenvalues appear in conjugate pairs, hence their product is positive as well. This yields a positive determinant. 

$6.\Rightarrow 7.:$ Let $\Sigma, \Sigma'\in \operatorname{diag}_{n,\sigma}$ be signatures that differ only in the first sign. By assumption we have 
$$\operatorname{det}(I-S\Sigma)\ >\ 0\ \text{and}\ \operatorname{det}(I-S\Sigma')\ >\ 0\ . $$ Then, by the linearity of the determinant for rank-$1$ updates, it holds $$\operatorname{det}(I-SD)\ >\ 0\ ,$$ where $D\in\operatorname M_n(\mathbb R)$ is a diagonal matrix whose first entry lies in the interval $[-1,1]$, while all others equal the corresponding entries in $\Sigma\  /\ \Sigma'$. Now apply this argument inductively.   

$7.\Rightarrow 1.:$ Assume that $\operatorname{det}(I-S\Sigma)>0$ for all for all real diagonal matrices $D\in\operatorname M_n(\mathbb R)$ with $\lVert D\lVert_{\infty}\le 1$, but $\rho_0^s(S)\ge 1$. Then there exists a signature $\Sigma\in \operatorname{diag}_{n,\sigma}$ s.t. $S\Sigma$ has at least one real eigenvalue $\lambda$ with $\vert \lambda\vert \ge 1$. Define $D:= \frac 1\lambda \Sigma$. Clearly, $D$ is a diagonal matrix with $\Vert D\Vert_\infty \le 1$. And, by SHIFT, it holds $$\operatorname{det}(I-SD)\ =\ 0$$ -- in contradiction to the hypothesis.

$2.\Leftrightarrow 3.:$
 Let $z\equiv u-w$ with $u \perp w$ in that $u \geq 0 \leq w$ and $u^\top w =0$, we obtain $|z| = u+w$. Substituting this into the AVE, we get
 \begin{align*}
   \qquad \hat c\; &=\; u-w + S (u+w)    \\
  \Longleftrightarrow  \qquad (I-S)w\; &=\; -\hat c + (I+S)u \\
  \Longleftrightarrow  \; \qquad \quad \quad \quad w\; &=\; -(I-S)^{-1} \hat c + (I-S)^{-1}(I+S)u. \\
 \end{align*}
The latter equation has the form of an LCP and hence possesses a unique solution if and only if $(I-S)^{-1}(I+S)$ is a $P$-matrix.

$3.\Leftrightarrow 5.:$ If $b$ lies in the orthant defined by $\Sigma$, then $\Sigma b= \vert b\vert$, that is, $b$ is a solution of the system. But then the equivalence is clear.

$2.\Leftrightarrow 7.:$ For $A,B\in \operatorname{M}_n(\mathbb R)$ the following equivalency holds:  $TA+(I-T)B$ is regular for all $n$-dimensional diagonal matrices $T$ with entries $t_i\in [0,1] \Longleftrightarrow A^{-1}B$ is a $P$-matrix \cite[Thm. 3.4]{johnson1995pmatrix}.

But we have $T(I-S)+(I-T)(I+S)=I-(I-2T)S$ -- and the set of matrices $I-2T$ is clearly identical to the set of diagonal matrices $D$ with $\lVert D\lVert_{\infty}\le 1$.

$7.\Rightarrow 6.:$ Obvious, since the $n$-dimensional diagonal matrices $D$ with $\lVert D\lVert_{\infty} \le 1$ are the convex hull of $\operatorname{diag}_{n,\sigma}$. 

$6.\Rightarrow 4.:$ If we interpret \eqref{AVE_std} as the piecewise linear function $$\varphi: \mathbb R^n\ \rightarrow\ \mathbb R^n,\ z\ \rightarrow\ z+S\vert z\vert\ ,$$ then $\operatorname{det}(I-S\Sigma)>0$ for all signatures means that the limiting Jacobians of $\varphi$ all have the same determinant sign -- a property which is called coherent orientation and implies surjectivity of the map \cite[p. 32]{scholtes2012introduction}. Since the piecewise linearity of $\varphi$ originates in absolute values that are not encapsulated in other absolute values, it is a simply switched piecewise linear function in the sense of \cite[Parag. 2]{griewank2014abs}. 

Also, by continuity of the determinant, there exists, for each signature $\Sigma$, an open neighborhood $M_{\Sigma}\subset \operatorname{M}_n(\mathbb R)$ about $I$ s.t. for all $\bar I\in M_{\Sigma}$ we have $\operatorname{det}(\bar I- S\Sigma)>0$. Then (the finite intersection of open sets) $M\equiv \bigcap_{\Sigma \in \operatorname{diag}_{n,\sigma}} M_{\Sigma}$ is a nonempty open neighborhood about $I$ s.t. $\operatorname{det}(\bar I- S\Sigma)>0$ for all $\bar I\in M$ and all $\Sigma\in \operatorname{diag}_{n,\sigma}$. Hence, the coherent orientation of $\varphi$ is stable under small perturbations of $I$. Thus, it conforms to the definition of a stably coherently oriented and simply switched piecewise linear map in \cite[Parag. 4.]{griewank2014abs}. And as such it is also injective (see \cite[Cor. 4.5.]{griewank2014abs}), hence bijective. 

$4.\Rightarrow 3.:$ Obvious. 
 \end{proof}

\begin{remark}
The equivalency $1.\Leftrightarrow 3.\Leftrightarrow 6. \Leftrightarrow 7.$ in Theorem \ref{unique}. was first stated by J. Rohn. 
The new proofs (mostly) use linear complementarity theory and thus showcase the kinship of LCPs and AVEs. 
For the linear algebraic original proofs, see, e.g., \cite[p. 218]{neumaier1990interval}.  
In \cite[Parag. 7.5]{griewank2014abs} Griewank proposed the transformation of a general PLE in so-called \textit{abs-normal} representation into an LCP. To prove $2.\Leftrightarrow 3.$ we adapted this reformulation to the AVE. 
The proof of $6.\Rightarrow 4.$ demonstrates the productive capacity of recent piecewise linear theory. 
The fact that the linear transformation $(I-S\Sigma)^{-1}$ maps $\hat c$ to a different orthant than the one defined by $\Sigma$ for all but one $\Sigma$ in $\operatorname{diag}_{n,\sigma}$ -- that is, point $5.$ -- is not interesting in the present setting, but gains significance in the context of Newton type approaches to the solution of \eqref{AVE}, such as those presented in \cite{griewank2014abs}.  
\end{remark}

Note that, by the SHIFT argument in the proof, the above statements still hold if we replace $I$ by $\alpha\cdot I$ and $ \rho_0^s(S)<1$ by $\rho_0^s(S)<\alpha$, respectively ($\alpha$ a positive scalar). 

Furthermore, if we keep in mind that multiplication by a signature matrix merely flips the signs of a row or column without changing the absolute values of the entries, we immediately see: 
\begin{align}\label{inftyEqual}
 \lVert \Sigma_1 S\lVert_{\infty} =  \lVert S \Sigma_2\lVert_{\infty} = \lVert \Sigma_1 S \Sigma_2 \lVert_{\infty}=\lVert S \lVert_{\infty}\ \forall\ \Sigma_1, \Sigma_2 \in \operatorname{diag}_{n,\sigma}.
\end{align}
Consequently, we also get:
\begin{align}\label{inftyBound}
\rho_0(\Sigma S)\le\rho (\Sigma S)\le \lVert \Sigma S\lVert_{\infty}=\lVert S \lVert_{\infty}\ \forall\ \Sigma \in \operatorname{diag}_{n,\sigma},
\end{align}
which implies $\rho_0^s(S) \le \lVert S\lVert_{\infty}$. As we we will only consider $S$ with $\lVert S\lVert_{\infty}<1$ in the present work, this yields $\rho_0^s(S)<1$ for all systems investigated hereafter and thus positively answers the question of their unique solvability. 

While we only make use of the infinity-case, it is worth mentioning that $\rho_0^s(S)$ is, in fact, bounded by all $p$-norms (see \cite[Thm. 2.15]{rump1997theorems}). Moreover, note that by the Perron-Frobenius rescaling introduced in \cite[Lem. 6.4]{griewank2014abs} any system $(I-S\Sigma)z=\hat c$ with $\Vert S\Vert_1<1$ can be transformed into a system $(I-S'\Sigma) z'= c'$ with $\Vert S'\Vert_\infty<1$.

\section{Main theorem}

We continue to use $S\in\operatorname M_n(\mathbb R)$ and $z,\hat c\in \mathbb R^n$ in their roles of the previous sections, and introduce a slight abuse of notation: Hereafter we will identify a vector $v\in \mathbb R^n$ with the set of its entries, ordered by their index. That is, $v\equiv \{v_1, \dots, v_n\}$. This way, the sets $C_{\max}$ and $ \Sigma_{\ne}$ in the definitions below can contain arbitrary subsets of the entries of $\hat c$. Now let
\begin{align*}
 C_{\max}\ \equiv\ \{ \hat c_j\in \hat c : \vert \hat c_j\vert = \max_{k\in [n]}(\vert \hat c_k\vert)\},
\end{align*}
and 
\begin{align*}
 \Sigma_{\ne}\ \equiv\ \{ \hat c_j\in \hat c : \sign(z_j)\ne \sign(\hat c_j)\},
\end{align*}
where $\sign$ denotes the \textit{signum function}. That is, $\sign$ is an element in \{-1, 0, 1\}. This is a stricter notion of sign coincidence than the one given in the introduction, where $0$ was essentially treated as a logical \textit{don't-care}, for which both $+$ and $-$ were allowed as proper signs. 


\begin{theorem}\label{mainThm}
Let $S\in\operatorname M_n(\mathbb R)$ and $z,\hat c\in \mathbb R^n$ such that it holds $$(I-S\Sigma)z\ =\ \hat c\ ,$$ where $\Sigma z=\vert z\vert$. Then we have $$C_{\max}\cap \Sigma_{\ne}\ =\  \emptyset\ ,$$ if one of the following conditions is satisfied:
 \begin{enumerate}
  \item $\lVert S\lVert_{\infty}\ <\ \frac 12$.
  \item $S$ is irreducible with $\lVert S\lVert_{\infty}\ \le\ \frac 12$.
  \item $S$ is strictly diagonally dominant with $\lVert S \lVert_{\infty}\ \le\ \frac 23$.
  \item $S$ is tridiagonal with $\lVert S \lVert_{\infty}\ <\ 1$.
 \end{enumerate}
\end{theorem}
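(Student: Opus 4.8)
The plan is to argue by contradiction in every case, first isolating a single extremal inequality that holds whenever the conclusion fails, and then sharpening that inequality with the structural hypothesis at hand.

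I would begin with a common reduction. Put $M\equiv\max_k\lvert\hat c_k\rvert$ and $m\equiv\lVert z\rVert_\infty=\max_k\lvert z_k\rvert$; if $m=0$ then $z=\mathbf 0$, hence $\hat c=\mathbf 0$, and $C_{\max}$, $\Sigma_{\ne}$ are both empty, so assume $m>0$. Suppose some entry $\hat c_{i_0}$ lies in $C_{\max}\cap\Sigma_{\ne}$; take $\hat c_{i_0}=M>0$, so the sign clash forces $z_{i_0}\le 0$ (the case $\hat c_{i_0}=-M$ is the identical mirror computation). Reading off the $i_0$-th row of $z-S\lvert z\rvert=\hat c$ gives $(S\lvert z\rvert)_{i_0}=z_{i_0}-M<0$, whence $\lvert(S\lvert z\rvert)_{i_0}\rvert=M+\lvert z_{i_0}\rvert\ge M$. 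Since also $\lvert(S\lvert z\rvert)_{i_0}\rvert\le\lVert S\rVert_\infty\,m$, we get $M\le\lVert S\rVert_\infty\,m$. Evaluating instead at an index $j^\ast$ with $\lvert z_{j^\ast}\rvert=m$ and using the estimate from the Proposition, $\lvert(S\lvert z\rvert)_{j^\ast}\rvert\le\lVert S\rVert_\infty\,m<m$, yields $M\ge\lvert\hat c_{j^\ast}\rvert\ge(1-\lVert S\rVert_\infty)m$. Together these give $(1-\lVert S\rVert_\infty)m\le M\le\lVert S\rVert_\infty\,m$, i.e. $\lVert S\rVert_\infty\ge\tfrac12$.

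This settles condition 1 outright: if $\lVert S\rVert_\infty<\tfrac12$ the displayed inequality is impossible. For condition 3 I would sharpen the upper bound by splitting the $i_0$-th row. With $d\equiv\lvert S_{i_0i_0}\rvert$ and $o\equiv\sum_{k\ne i_0}\lvert S_{i_0k}\rvert$, strict diagonal dominance gives $o<d$ and $d+o\le\tfrac23$, so $2o<d+o\le\tfrac23$ and hence $o<\tfrac13$. Bounding $\lvert z_k\rvert\le m$ only off the diagonal, $M+\lvert z_{i_0}\rvert=\lvert(S\lvert z\rvert)_{i_0}\rvert\le d\lvert z_{i_0}\rvert+o\,m$, and since $d\le\lVert S\rVert_\infty<1$ this forces $M\le o\,m<\tfrac13 m$; but the common reduction with $\lVert S\rVert_\infty\le\tfrac23$ gives $M\ge\tfrac13 m$, a contradiction. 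Condition 2 is the tight boundary case $\lVert S\rVert_\infty=\tfrac12$, in which the whole inequality chain must be equalities: this forces $z_{i_0}=0$ and $\lvert(S\lvert z\rvert)_{i_0}\rvert=\lVert S\rVert_\infty m$, which can hold only if row $i_0$ attains the maximal row sum, there is no cancellation, and every $k$ with $S_{i_0k}\ne 0$ has $\lvert z_k\rvert=m$. Each such neighbour is again a global maximiser of $\lvert z\rvert$, so the same equality analysis applies there and propagates the condition $\lvert z_k\rvert=m$ along every out-edge of the directed graph of $S$. Invoking irreducibility (strong connectivity), the propagation reaches every index and in particular returns to $i_0$, forcing $\lvert z_{i_0}\rvert=m>0$ and contradicting $z_{i_0}=0$.

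Condition 4, the tridiagonal case, is the one I expect to be the main obstacle, because $\lVert S\rVert_\infty$ is now allowed up to $1$ and the crude bounds $M\le\lVert S\rVert_\infty m$ and $M\ge(1-\lVert S\rVert_\infty)m$ no longer clash. My plan is a discrete maximum-principle argument along the path graph underlying a tridiagonal matrix. From $M+\lvert z_{i_0}\rvert\le\sum_k\lvert S_{i_0k}\rvert\,\lvert z_k\rvert$ and $\sum_k\lvert S_{i_0k}\rvert<1$ one forces one neighbour $i_0\pm1$ to satisfy $\lvert z_{i_1}\rvert>M+\lvert z_{i_0}\rvert\ge\lvert z_{i_0}\rvert$, so $\lvert z\rvert$ strictly increases in passing to $i_1$. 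Combining this with $\lvert\hat c_k\rvert\le M$ at interior indices, I would try to continue into a walk along which $\lvert z\rvert$ is strictly increasing; since a strictly monotone walk on a path graph cannot revisit a vertex, it must march monotonically toward an endpoint. The crux — and the step I expect to be genuinely delicate — is to rule out premature termination of this walk at an interior local maximum of $\lvert z\rvert$, since the naive row-sum estimate only guarantees continuation while $\lvert z_{i_k}\rvert>M/(1-\sum_k\lvert S_{i_kk}\rvert)$. Closing this gap is precisely where the one-dimensional tridiagonal coupling must be exploited beyond the mere row-sum bound, and it is the heart of the theorem.
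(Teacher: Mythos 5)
Your proofs of conditions 1, 2 and 3 are correct, and they take a genuinely different route from the paper's. The paper handles 1 and 2 by showing, via the Neumann series $\sum_{k\ge 0}S^k$, that $(I-S\Sigma)^{-1}$ is strictly diagonally dominant with positive diagonal (Lemmas \ref{sufficient} and \ref{diagDom}), and the boundary case $\lVert S\rVert_\infty=\tfrac12$ costs it a lengthy entry-pattern analysis of the powers $S^k$ under irreducibility. Your two-sided extremal estimate $(1-\lVert S\rVert_\infty)\,m\le M\le \lVert S\rVert_\infty\,m$ settles 1 in a few lines, and your equality-case analysis --- $z_{i_0}=0$, full row sum, no cancellation, $\vert z_k\vert=m$ propagating along out-edges of the digraph of $S$ until strong connectivity forces $\vert z_{i_0}\vert=m>0$ --- replaces the paper's hardest lemma by an elementary argument. (One step worth writing out explicitly: at a vertex $j$ with $\vert z_j\vert=m$ the analysis is not literally ``the same'' as at $i_0$; it pivots on $\vert\hat c_j\vert\le M=\tfrac{m}{2}$ combined with $\vert\hat c_j\vert\ge m-\vert(S\vert z\vert)_j\vert$ and $\vert(S\vert z\vert)_j\vert\le\tfrac{m}{2}$, which again forces equality throughout; it does close.) For 3, the paper argues by induction on the dimension, folding the smallest-modulus variable into the diagonal in one case and doing a computation close in spirit to yours in the other; your direct clash $M\le o\,m<\tfrac13 m$ against $M\ge(1-\lVert S\rVert_\infty)m\ge\tfrac13 m$ needs no induction at all and is a real simplification.

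Condition 4, however, is a genuine gap, located exactly where you flag it. Your walk continues only while $\vert z_{i_k}\vert>M/(1-\sum_j\vert s_{i_k j}\vert)$, and since row sums may be arbitrarily close to $1$, the walk can stall at an interior vertex long before reaching an endpoint; no refinement of row-sum estimates alone will repair this, because such estimates never use more of the tridiagonal structure than adjacency. The paper closes this case not by a maximum principle but by induction on the dimension, reusing the folding trick from condition 3: given a falsifying tuple $(S,z,\hat c)$ in dimension $N+1\ge 3$, arrange (reversing the index order if need be) that the endpoint variable to be eliminated satisfies $\vert z_{N+1}\vert\le\vert z_N\vert$ and does not carry the violation, set $\zeta\equiv\vert z_{N+1}\vert/\vert z_N\vert\in[0,1]$, delete row and column $N+1$, and add $\zeta s_{N,N+1}$ to the $(N,N)$ diagonal entry. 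Tridiagonality guarantees that only row $N$ is affected, so $(z_1,\dots,z_N)^T$ solves the reduced system with right-hand side $(\hat c_1,\dots,\hat c_N)^T$; the norm bound and tridiagonality are preserved, the violating index survives, and the induction hypothesis for dimension $N$ is contradicted, with the base case $n=2$ checked by an explicit computation. This dimension reduction --- absorbing the eliminated variable's coupling $s_{N,N+1}\vert z_{N+1}\vert$ into the diagonal --- is the missing idea you would need to complete your argument.
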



\subsection{Proof of 1. and 2.}
The following lemma will provide a sufficient condition for the statement of the theorem to hold.

\begin{lemma}\label{sufficient}
 Let $\hat c, z \in \mathbb R^n$, $S\in \operatorname{M}_n(\mathbb R)$ with $\rho_0^s(S)<1$, and $\Sigma \in \operatorname{diag}_{n,\sigma}$, such that they satisfy \eqref{AVE}. Then, if the matrix $A\equiv (I-S\Sigma)^{-1}$ is strictly diagonally dominant with a positive diagonal, we have
 $$C_{\max}\cap \Sigma_{\ne}\ =\ \emptyset.$$
\end{lemma}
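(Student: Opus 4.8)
The plan is to reduce everything to a single dominance estimate on the inverse matrix. Since $\rho_0^s(S)<1$, the equivalence $1.\Leftrightarrow 6.$ of Theorem~\ref{unique}. gives $\det(I-S\Sigma)>0$, so $A\equiv(I-S\Sigma)^{-1}$ is genuinely well defined and the relation $(I-S\Sigma)z=\hat c$ inverts to $z=A\hat c$. Componentwise this reads
\[
  z_i\ =\ A_{ii}\,\hat c_i\ +\ \sum_{j\ne i}A_{ij}\,\hat c_j,\qquad i\in[n].
\]
I would then fix an arbitrary index $i$ with $\hat c_i\in C_{\max}$, i.e.\ $|\hat c_i|=\max_{k\in[n]}|\hat c_k|$, and show $\sign(z_i)=\sign(\hat c_i)$. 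This says $\hat c_i\notin\Sigma_{\ne}$; as $i$ was an arbitrary maximizing index, it yields $C_{\max}\cap\Sigma_{\ne}=\emptyset$.

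Before the generic estimate I would dispose of the degenerate case: if the maximal modulus is $0$, then $\hat c=\mathbf 0$, hence $z=A\mathbf 0=\mathbf 0$, so that $\sign(z_i)=0=\sign(\hat c_i)$ for every $i$ and $\Sigma_{\ne}=\emptyset$; the claim is then immediate. So assume $|\hat c_i|>0$.

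The core of the argument is that the diagonal contribution dominates the remainder. From the expansion above,
\[
  \Bigl|\sum_{j\ne i}A_{ij}\,\hat c_j\Bigr|\ \le\ \sum_{j\ne i}|A_{ij}|\,|\hat c_j|\ \le\ |\hat c_i|\sum_{j\ne i}|A_{ij}|\ <\ |\hat c_i|\,A_{ii}\ =\ |A_{ii}\,\hat c_i|,
\]
where the second inequality uses the maximality $|\hat c_j|\le|\hat c_i|$, the strict inequality is the strict diagonal dominance of $A$ (strict precisely because $|\hat c_i|>0$), and the last equality uses the positivity of the diagonal, $A_{ii}>0$. Thus $z_i$ is the term $A_{ii}\hat c_i$ plus a perturbation of strictly smaller modulus, so $z_i$ inherits the sign of $A_{ii}\hat c_i$; since $A_{ii}>0$, that sign is exactly $\sign(\hat c_i)$, giving $\sign(z_i)=\sign(\hat c_i)$ as required.

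I expect no serious obstacle: once $z=A\hat c$ is in hand, the lemma is essentially a one-line dominance estimate. The only points demanding care are (i) the role of the \emph{positive} diagonal, which is what forces the dominant term to point in the same direction as $\hat c_i$ rather than the opposite one, since strict diagonal dominance alone would only guarantee $z_i\ne 0$; and (ii) the treatment of the signum at zero, i.e.\ separating the all-zero right-hand side so that the strict inequalities genuinely apply when $|\hat c_i|>0$.
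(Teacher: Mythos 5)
Your proposal is correct and follows essentially the same route as the paper: expand $z_i$ via $z=A\hat c$, use strict diagonal dominance together with the positive diagonal to show the term $A_{ii}\hat c_i$ dominates the off-diagonal contribution, and treat the case $\hat c=\mathbf 0$ separately. The only (immaterial) difference is in the degenerate case, where you conclude $z=A\mathbf 0=\mathbf 0$ directly, while the paper invokes unique solvability of the system.
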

\begin{proof} Fix any $\hat c_i \in C_{\max}$. We distinguish two cases:

 \textit{Case 1:} Let $\vert \hat c_i\vert > 0$. Since $\vert \hat c_i\vert \ge \vert \hat c_j\vert\ \text{for all}\ j\in [n]$, we always have $\vert \hat c_i a_{ii}\vert > \sum_{j\ne i}\vert\hat c_j a_{ij}\vert$ due to the strict diagonal dominance of $A$. Consequently, since $a_{ii}$ is positive, $z_i$ will adopt the sign of $\hat c_i$. 
 
 \textit{Case 2:} Let $\hat c_i = 0$. Then $\hat c =\mathbf 0$, as $\hat c_i\in C_{\max}$. Hence, because of the unique solvability implied by $\rho_0^s(S)\le \vert S\vert_\infty <1$, $z$ is the zero vector as well -- which especially means $z_i=0$.
 \end{proof}

With this criterium in hand, we can prove the first two statements of the theorem:  

\begin{lemma}\label{diagDom}
 Let $S\in \operatorname{M}_n(\mathbb R)$ be irreducible with $\|S\|_\infty\le\frac12$,
then the inverse of the matrix $A\equiv I-S$ is strictly diagonally dominant and has a positive diagonal.
\end{lemma}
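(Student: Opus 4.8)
The plan is to work with $B := A^{-1} = (I-S)^{-1}$, which exists (and is given by a convergent Neumann series) because $\|S\|_\infty \le \tfrac12 < 1$ forces $\rho_0^s(S) \le \|S\|_\infty < 1$ via \eqref{inftyBound}. The cleanest handle is the identity $B(I-S) = I$, i.e.\ $B = I + BS$, which gives the row recursions $b_{ii} = 1 + \sum_l b_{il}s_{li}$ and $b_{ij} = \sum_l b_{il}s_{lj}$ for $j \neq i$. The target is, for every $i \in [n]$, the two statements $b_{ii} > 0$ and $|b_{ii}| > \sum_{j\neq i}|b_{ij}|$; the latter is precisely the row dominance consumed by Lemma \ref{sufficient}.

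First I would turn these recursions into a single quantitative estimate. Writing $r_i := \sum_l |b_{il}|$ and $X_i := \sum_l |b_{il}|\,|s_{li}|$, the triangle inequality applied to the off-diagonal recursion, together with the row-sum bound $\sum_j |s_{lj}| \le \|S\|_\infty \le \tfrac12$, yields
\[
 \sum_{j\neq i}|b_{ij}| \;\le\; \sum_l |b_{il}|\Big(\sum_{j\neq i}|s_{lj}|\Big) \;\le\; \tfrac12 r_i - X_i .
\]
Since $r_i = |b_{ii}| + \sum_{j\neq i}|b_{ij}|$, a one-line rearrangement gives the key inequality $|b_{ii}| - \sum_{j\neq i}|b_{ij}| \ge 2X_i \ge 0$, i.e.\ weak row dominance for free. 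The diagonal recursion gives $b_{ii} \ge 1 - X_i$, and since $X_i \le \tfrac12 r_i \le \tfrac12\|B\|_\infty \le 1$ this already yields $b_{ii} \ge 0$.

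The crux is upgrading $\ge$ to $>$, and this is exactly where irreducibility must be used (under condition 1.\ the strict bound $\|S\|_\infty < \tfrac12$ makes every intermediate inequality strict automatically). If $X_i > 0$ the estimate is already strict. The delicate case is $X_i = 0$, i.e.\ $b_{il} = 0$ for every $l$ with $s_{li}\neq 0$. Assuming $|b_{ii}| = \sum_{j\ne i}|b_{ij}|$ for contradiction, all intermediate inequalities must be tight; this forces (a) $\sum_j|s_{lj}| = \tfrac12$ for every $l$ in the support $L := \{l : b_{il}\neq 0\}$, and (b) no sign cancellation in any off-diagonal sum $b_{ij} = \sum_l b_{il}s_{lj}$. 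By (b), as soon as some $l\in L$ satisfies $s_{lj}\neq 0$ the entry $b_{ij}$ is a sum of like-signed nonzero terms, hence $b_{ij}\neq 0$ and $j\in L$; thus $L$ is closed under the out-edges of the digraph of $S$. Since $i\in L$ and $S$ is irreducible, forward reachability forces $L = [n]$, whence $b_{il}\neq 0$ for all $l$, and then $X_i = 0$ forces column $i$ of $S$ to vanish --- impossible for a strongly connected digraph on $n \ge 2$ vertices. Therefore $|b_{ii}| > \sum_{j\neq i}|b_{ij}|$, and combined with $b_{ii}\ge 0$ this also gives $b_{ii} > 0$.

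I expect the main obstacle to be this boundary configuration $\|S\|_\infty = \tfrac12$ with all row sums equal to $\tfrac12$: there the majorizing estimate is tight and only weak dominance survives, so strictness has to be manufactured from a purely combinatorial fact, namely that strong connectivity propagates the support $L$ forward until it exhausts $[n]$ and collides with $X_i = 0$. The steps I would check most carefully are the precise equality conditions behind (a) and (b), and that the self-contribution $l = i$ in the recursions (a possibly nonzero $s_{ii}$) creates no gap in the forward-closure argument.
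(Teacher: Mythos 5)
Your proof is correct, and it takes a genuinely different route from the paper's. The paper expands $A^{-1}$ as the Neumann series $I+\sum_{k\ge1}S^k$, obtains weak dominance from the geometric bound, and then, assuming some row $i$ is not strictly dominated, extracts no-cancellation conditions on the entries $s_{ij}^{(k)}$ of all powers; its combinatorial heart is an induction showing that row $i$ of $S^k$ has the same support as row $i$ of $|S|^k$, after which the Perron--Frobenius fact that an irreducible nonnegative matrix has some power with positive $(i,i)$ entry (cited from the literature) yields $s_{ii}^{(2k_i)}>0$ and hence the contradiction. You instead work with the inverse itself via $B=I+BS$: the slack estimate $|b_{ii}|-\sum_{j\ne i}|b_{ij}|\ge 2X_i$ with $X_i=\sum_l|b_{il}|\,|s_{li}|$, together with $b_{ii}\ge 1-X_i\ge 0$, gives weak dominance and nonnegativity of the diagonal in one stroke; in the equality case (which forces $X_i=0$), the no-cancellation condition makes the support $L$ of row $i$ of $B$ forward-closed in the digraph of $S$, strong connectivity then forces $L=[n]$, and $X_i=0$ annihilates column $i$ of $S$ --- contradicting irreducibility for $n\ge 2$. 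Your argument is more elementary and self-contained: it avoids the power-pattern induction and the external fact about nonnegative irreducible matrices, and it produces quantitative by-products (the $2X_i$ slack and $b_{ii}\ge 1-X_i$). The paper's series argument, in exchange, yields extra structural information (e.g.\ $s_{ii}^{(k)}\ge 0$ for all $k$) and specializes verbatim to the non-irreducible corollary with $\|S\|_\infty<\frac12$. Two small points you should make explicit: $i\in L$ holds because $X_i=0$ forces $b_{ii}=1$ (you only assert it), and the degenerate case $n=1$ is covered since there the equality assumption reads $|b_{11}|=0$, already contradicting $b_{11}=1$ before any graph argument is needed.
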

\begin{proof}
 We have $\|S^k\|_\infty\le\|S\|^k_\infty\le\frac1{2^k}$, which implies
$\lim_{k\to\infty}(I-A)^k=\lim_{k\to\infty}S^k=0$. Thus, $A^{-1}$ can be expressed
via the Neumann series
$$
A^{-1}=\sum_{k=0}^\infty(I-A)^k=\sum_{k=0}^\infty S^k = I+\sum_{k=1}^\infty
S^k.
$$
The inequality $\|\sum_{k=1}^\infty S^k\|_\infty\le
\sum_{k=1}^\infty\|S\|^k_\infty\le\sum_{k=1}^\infty\frac1{2^k}=1$
already ensures weak diagonal dominance of $A^{-1}$.

Now fix any $i\in [n]$ and assume that the $i$-th row
of $A^{-1}$ were not strictly dominated by its diagonal entry.
Denote the entries of $S^k$ by $s_{ij}^{(k)}$ for $i,j\in[n]$.
Then
$$
1=\sum_{j=1}^n\left|\sum_{k=1}^\infty s_{ij}^{(k)}\right|
\le\sum_{j=1}^n\sum_{k=1}^\infty|s_{ij}^{(k)}|\le
\sum_{k=1}^\infty\frac1{2^k}=1,
$$
which implies that
\begin{align}\label{cond1}
 \left|\sum_{k=1}^\infty s_{ij}^{(k)}\right|=\sum_{k=1}^\infty
|s_{ij}^{(k)}|\quad\forall j\in[n]
\end{align}
and $\sum_{j=1}^n |s_{ij}^{(k)}|=\frac1{2^k}$ for all $k\ge1$.
In particular,
\begin{align*}
\frac1{2^{k+1}}  &=\sum_{j=1}^n|s_{ij}^{(k+1)}|=\sum_{j=1}^n\left|\sum_{i=1}^n s_{ir}^{(k)}s_{rj}^{(1)}\right|\le \sum_{j=1}^n\sum_{r=1}^n|s_{ir}^{(k)}s_{rj}^{(1)}|\\
 &=\sum_{r=1}^n|s_{ir}^{(k)}|\sum_{j=1}^n|s_{rj}^{(1)}|\le \sum_{r=1}^n|s_{ir}^{(k)}|\cdot\frac12=\frac1{2^{k+1}},
 \end{align*}
which implies for each $k\ge1$ that
\begin{align}\label{cond2}
\left|\sum_{r=1}^n s_{ir}^{(k)}s_{rj}^{(1)}\right|=\sum_{r=1}^n|
s_{ir}^{(k)}s_{rj}^{(1)}|\quad\forall j\in[n].
\end{align}
\it Claim: \rm For each $k\ge1$, the $i$-th row of $S^k$ has the same
entry pattern as the $i$-th row of $|S|^k$.
\smallskip
We prove this by induction. The case $k=1$ is trivial. Assume the
claim holds for a given $k$. Let $\mathcal I_i^{(k)}=\{a_1,\ldots,a_m\}$
be the set of indices of the nonzero entries of the $i$-th row~$S^k$,
or equivalently of $|S|^k$. Define $\mathcal I_{a_1}^{(1)},\ldots,
\mathcal I_{a_m}^{(1)}$ analogously, and let $\mathcal I\equiv\bigcup_{a\in\mathcal I_i^{(k)}}
\mathcal I_a^{(1)}$. Obviously, $\mathcal I$ is precisely the set of indices
of the nonzero entries in the $i$-th row of $|S|^{k+1}=|S|^k|S|$,
and $s_{ij}^{(k+1)}\ne0$ at most if $j\in \mathcal I$. But this necessary
condition is also sufficient because otherwise \eqref{cond2}
would be violated. This completes the proof of the claim.

Since $|S|$ is irreducible and nonnegative, there exists a power
$|S|^{k_i}$ with a positive entry at $(i,i)$ (see, e.g., \cite[p. 3]{kitchens1998markov}).
By what we just showed, this implies $s_{ii}^{(k_i)}\ne0$. Therefore
$(s_{ii}^{(k_i)})^2>0$ and by~\eqref{cond2} also $s_{ii}^{(2k_i)}>0$.
Now \eqref{cond1} implies that $s_{ii}^{(k)}\ge0$ for all $k\ge1$.

Let $D$ be the diagonal part of $\sum_{k=1}^\infty S^k$ and
$B\equiv\sum_{k=1}^\infty S^k-D$.
Then $A^{-1}=I+D+B$, where $(I+D)_{ii}\ge 1+s_{ii}^{(2k_i)}>1$,
while $\sum_{j=1}^n |B_{ij}|\le 1-s_{ii}^{(2k_i)}<1$.
So our assumption that the $i$-th row of $A^{-1}$ were not
strictly dominated by its diagonal entry is in fact wrong. This completes
the proof.
\end{proof}

Note that for $\lVert S\lVert_1\leq \frac 12$ the arguments of the proof imply strict diagonal dominance of the inverse over the columns. Obviously, we also have strict diagonal dominance of $(\alpha A)^{-1}=\frac 1\alpha (A^{-1})$, where $\alpha \in\mathbb R \backslash \{0\}$. 

\begin{corollary}
Let $S\in \operatorname{M}_n(\mathbb R)$ with $\|S\|_{\infty} <\frac 12$,
then the inverse of the matrix $A\equiv I-S$ is strictly diagonally dominant and has a positive diagonal.
\end{corollary}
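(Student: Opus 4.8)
The plan is to avoid the combinatorial entry-pattern analysis needed for Lemma \ref{diagDom} and instead exploit the \emph{strict} inequality $\|S\|_{\infty}<\frac12$ directly through the Neumann series. Exactly as in the proof of Lemma \ref{diagDom}, the bound $\|S\|_{\infty}<\frac12<1$ guarantees $\lim_{k\to\infty}S^k=0$, so $A\equiv I-S$ is invertible and
\[
A^{-1}=\sum_{k=0}^\infty S^k=I+B,\qquad B\equiv\sum_{k=1}^\infty S^k.
\]

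First I would estimate the off-diagonal ``mass'' carried by the correction term $B$. Summing the geometric series of the row-sum norms gives
\[
\|B\|_{\infty}\le\sum_{k=1}^\infty\|S\|_{\infty}^k=\frac{\|S\|_{\infty}}{1-\|S\|_{\infty}}<1,
\]
where the final strict inequality is \emph{equivalent} to the hypothesis $\|S\|_{\infty}<\frac12$. This one bound does all the work: for every row index $i$ it yields $\sum_{j=1}^n|B_{ij}|\le\|B\|_{\infty}<1$, and in particular $|B_{ii}|<1$.

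From $|B_{ii}|<1$ the diagonal entry $(A^{-1})_{ii}=1+B_{ii}\ge 1-|B_{ii}|>0$ is positive. For strict diagonal dominance I would then chain
\[
\sum_{j\ne i}\bigl|(A^{-1})_{ij}\bigr|=\sum_{j\ne i}|B_{ij}|=\Bigl(\sum_{j=1}^n|B_{ij}|\Bigr)-|B_{ii}|<1-|B_{ii}|\le 1+B_{ii}=(A^{-1})_{ii},
\]
the strict middle step using $\sum_{j}|B_{ij}|<1$ and the last step using $-|B_{ii}|\le B_{ii}$. Since $i$ was arbitrary, this establishes both assertions.

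The point worth flagging is that there is essentially no obstacle here \emph{precisely because} strictness has been assumed: passing from $\|S\|_{\infty}\le\frac12$ to $\|S\|_{\infty}<\frac12$ converts the borderline estimate $\|B\|_{\infty}\le 1$ of Lemma \ref{diagDom} into the strict $\|B\|_{\infty}<1$, so row dominance is immediate and the irreducibility hypothesis can be dropped entirely. (It is exactly the equality-throughout scenario $\|B\|_{\infty}=1$ that forced the delicate argument about the entry patterns of $S^k$ in Lemma \ref{diagDom}, and strictness rules it out at a stroke.) One could alternatively try to derive the corollary from Lemma \ref{diagDom} by a density or perturbation argument on reducible $S$, but the direct geometric-series estimate above is cleaner and fully self-contained.
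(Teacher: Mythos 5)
Your proposal is correct and follows essentially the same route as the paper's own proof: expand $A^{-1}$ as the Neumann series $I+\sum_{k=1}^\infty S^k$ and use the strict bound $\|S\|_\infty<\frac12$ to get $\bigl\|\sum_{k=1}^\infty S^k\bigr\|_\infty<1$, from which strict diagonal dominance and positivity of the diagonal follow. The only difference is that you spell out the final elementary step (deducing dominance of $I+B$ from $\|B\|_\infty<1$), which the paper leaves implicit.
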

\begin{proof}
Consider the Neumann series $\sum_{k=0}^{\infty} S^k= I + \sum_{k=1}^{\infty} S^k$ in the proof above.  With the sharper bound we get $\|\sum_{k=1}^\infty S^k\|_\infty\le
\sum_{k=1}^\infty\|S\|^k_\infty<\sum_{k=1}^\infty\frac1{2^k}=1$. This ensures that $A^{-1}$ is strictly diagonally dominant with a positive diagonal.
\end{proof}

Now recall that, by \eqref{inftyEqual}, we have $\lVert S\Sigma\lVert_{\infty} = \lVert S\lVert_{\infty}$ for all $\Sigma \in\operatorname{diag}_{n,\sigma}$. 
Then it is clear that the restrictions stated in $1.$ and $2.$ in Theorem \ref{mainThm}. imply the strict diagonal dominance of $(I-S\Sigma)^{-1}$ for all $\Sigma\in\operatorname{diag}_{n,\sigma}$ -- which also includes the proper signature of the solution (in the sense that $\Sigma z =\vert z\vert$) and thus allows for the application of Lemma \ref{sufficient}. to the situation of the first two conditions. This completes the proof of the first two statements of Theorem \ref{mainThm}. 

\begin{remark} The matrix  
 $$S\equiv \begin{bmatrix} 0 & \frac 12 \\ 0 & \frac 12 \end{bmatrix}\qquad \text{with}\qquad (I-S)^{-1}=\begin{bmatrix} 1 & 1 \\ 0 & 2 \end{bmatrix}$$
 shows that in the limiting case $\lVert S \lVert_{\infty} =\frac 12$ the criterium of irreducibility cannot be omitted  in Lemma \ref{diagDom}. Furthermore, for $\epsilon >0$ arbitrarily small 
 $$S\equiv \begin{bmatrix} \epsilon & \frac 12 \\ 0 & \frac 12 \end{bmatrix}\qquad \text{with}\qquad (I-S)^{-1}= \begin{bmatrix} 1-2\epsilon & 1+2\epsilon \\ 0 & 2-2\epsilon \end{bmatrix}$$
 and $\lVert S\lVert_{\infty}=\frac 12 +\epsilon$ proves the sharpness of the bound $\lVert S\lVert_{\infty}\le\frac 12$.
 
 Also note that, if $S\in \operatorname M_n(\mathbb R)$ is nilpotent (which implies the nilpotency of $S\Sigma$ for all $\Sigma\in\operatorname{diag}_{n,\sigma}$), the Neumann expansion of $(I-S\Sigma)^{-1}$ has at most $n$ summands. Thus, if $\lVert S\lVert_{\infty}\le \frac 12$, we have 
 $$\left\|\sum_{k=1}^\infty S^k\right\|_\infty\ \le\ \sum_{k=1}^\infty\|S\|^k_\infty\ =\ \sum_{k=1}^{n-1}\|S\|^k_\infty\ \le\ \sum_{k=1}^{n-1}\frac1{2^k}\ <\ 1$$ -- 
 and again obtain strict diagonal dominance of the inverse of $I-S\Sigma$. But, since nilpotent matrices are permutationally similar to strictly upper triangular matrices, the corresponding AVEs can be solved by a modified backwards substitution in $\mathcal O(n^2)$ operations. Which is why we did not include this case in the main theorem.

\end{remark}

 \subsection{Proof of 3.}
Denote by $\operatorname{diag}_n(a_1, \dots, a_n)$ the $n$-dimensional diagonal matrix with entries $a_1, \dots, a_n\in \mathbb R$. Furthermore, define analogously to $C_{\max}$:
\begin{align*}
  Z_{\max}&\equiv \{  z_j\in z: \vert z_j\vert = \max_{k\in [n]}(\vert  z_k\vert)\}\ ,\\ 
  Z_{\min}&\equiv \{  z_j\in z: \vert z_j\vert = \min_{k\in [n]}(\vert  z_k\vert)\}\ .
 \end{align*}
We first exclude two special cases:
\begin{itemize}
\item  As $\rho_0^s(S)\le \lVert S\lVert_{\infty} <1$, the system  is uniquely solvable and the statement thus holds trivially for $z=\mathbf 0$. We therefore limit our attention to cases, where $z$ has at least one nonzero entry. 
  
\item $Z_{\min}$ and $Z_{\max}$ are either disjoint or equal. In both cases neither set is empty. Since $ \lVert S\lVert_{\infty} <1$, it is $\sign(z_i)=\sign(\hat c_i)$ for all $i\in[n]$, if $\vert z_1\vert=\dots =\vert z_n\vert$, i.e., if $Z_{max}=Z_{\min}$. Thus, we only have to prove $3.$ for cases, where $Z_{\max}\ne Z_{\min}$ and hence both sets are disjoint. 
\end{itemize}
The following observation is crucial:
\begin{itemize}
\item If $\lVert S\lVert_{\infty} <1$ and $z_i \in Z_{\max}$, we have $\sum_j \vert s_{ij} z_j\vert < \vert z_i\vert$ and hence $\mathbf{sign}(\hat c_i) = \mathbf{sign}(z_i)$. Consequently, if there were a tuple $(S,z,\hat c)$ that violated the claim of the theorem, for any $\hat c_j \in C_{\max}\cap \Sigma_{\ne}$ we would have $z_j\not\in Z_{\max}$. 
\end{itemize}
The proof is performed by induction. For $n=1$ the statement holds trivially. Assume it holds for $N\ge 1$, but there exists a tuple $(S,z, \hat c)$ in dimension $N+1$ that falsifies it. We distinguish two cases:

\textit{Case 1:} Let $\hat c_i \in C_{\max}$ and $z_i \not\in Z_{\min}$ s.t. $\mathbf{sign}(\hat c_i) \ne \mathbf{sign}(z_i)$. We will, from the falsifying tuple $(S,z, \hat c)$ in dimension $N+1$, construct a tuple $(\bar S,\bar z,\bar c)$ in dimension $N$ that falsifies the statement as well and thus contradicts the induction hypothesis. 
 
 Assume $w.l.o.g.$ that $z_{N+1}\in Z_{\min}$. Then for all $j\in [N]$ there exists a scalar $\zeta_j \in \left[0, 1 \right]$ such that $$\zeta_j \cdot \vert z_j\vert = \vert z_{N+1}\vert\quad \Longrightarrow\quad \zeta_j\cdot s_{j,N+1} \cdot \vert z_j\vert = s_{j,N+1}\cdot \vert z_{N+1}\vert\ .$$ 
 Denote by $S_{N+1,N+1}$ an $N$-dimensional square matrix derived from $S$ by removing row and column $N+1$. Then we have, for  
 \begin{align}\label{defZ}
  \bar z\ =\ (z_1,\dots, z_N)^T
 \end{align}
 and
 \begin{align}
   \bar S\ \equiv\ S_{N+1,N+1}\ +\ \operatorname{diag}_N(\zeta_1\cdot s_{j,N+1},\dots, \zeta_N\cdot s_{N,N+1})
 \end{align}
 that
 \begin{align}\label{defC}
 \bar z + \bar S \vert \bar z\vert\ =\ (\hat c_1,\dots, \hat c_N)^T \equiv\ \bar\ c\ .
 \end{align}
 Since the coefficients $\zeta_i$ are in $[0,1]$ for all $i\in[n]$, we have $\lVert \bar S\lVert_{\infty}\le \lVert S\lVert_{\infty}\le \frac 23$. For the same reason $\bar S$ is also still strictly diagonally dominant. Now, since $z_i \not\in Z_{\min}$, but $z_{N+1} \in Z_{\min}$, we must have $1\le i\le N$. That is, row $i$ (that contains the contradiction) was not removed by the construction. Thus, the tuple $(\bar S,\bar z, \bar c)$ contradicts the induction hypothesis for dimension $N$.
 
 \textit{Case 2:} Let $\hat c_i \in C_{\max}$ and $z_i \in Z_{\min}$ s.t. $$\mathbf{sign}(\hat c_i)\ \ne\ \mathbf{sign}(z_i)\ .$$ There is the possibility that $z_i$ is the only element in $Z_{\min}$. In this case the construction devised above fails, as it eliminates the row that contains the contradiction. We thus use an approach by direct computation. For this we note that, since $\lVert S\lVert_{\infty}<1$ and thus $s_{ii}<1$ for all $i\in [n]$, the following two statements hold:
 \begin{align*}
 \mathbf{sign}(z_i-s_{ii}\vert z_i\vert)\ =\ \mathbf{sign}(z_i) 
 \end{align*}
 and
  \begin{align}\label{tools2}
 \left|  z_i-s_{ii}\vert z_i\vert \right| \ \ge\ (1-\vert s_{ii}\vert)\vert z_i\vert\ . 
 \end{align}
 With $\hat c_i=z_i-\sum_{j=1}^{N+1}s_{ij}\vert z_j\vert$, and since $\mathbf{sign}(\hat c_i) \ne \mathbf{sign}(z_i)= \mathbf{sign}(z_i-s_{ii}\vert z_i\vert)$, it holds 
 \begin{align*}
\mathbf{sign}(z_i-s_{ii}\vert z_i\vert)\ \ne\ \mathbf{sign}\left(- \sum_{j\ne i}s_{ij}\vert z_j\vert\right)\ 
\end{align*}
and thus 
\begin{align*}
\vert \hat c_i\vert \ = \ \left\vert 
\vert z_i-s_{ii}\vert  z_i\vert\vert - \left\vert \sum_{j\ne i}s_{ij}\vert z_j\vert \right\vert 
\right\vert\ .
\end{align*}
 Using \eqref{tools2} then yields:
  \begin{align}\label{ineqMain1}
 \vert \hat c_i\vert\ & =\ \left| z_i - s_{ii}\vert z_i\vert - \sum_{j\ne i}s_{ij}\vert z_j\vert \right| \\ \label{ineqMain2}
 & \le\ \left| (1-\vert s_{ii}\vert)\vert z_i\vert - \sum_{j\ne i} \vert s_{ij}z_j\vert \right|\ \le\ \sum_{j\ne i} \vert s_{ij}z_j\vert\ . 
 \end{align}
 Furthermore, from $\sum_j \vert s_{ij}\vert \le \frac 23$ (norm constraint) and $\sum_{j\ne i} \vert s_{ij}\vert < s_{ii}$ (strict diagonal dominance), we get $\sum_{j\ne i} \vert s_{ij}\vert < \frac 13$. Now let $z_m\in Z_{\max}$. With \eqref{ineqMain1} and \eqref{ineqMain2} we get the leftmost inequality in:
 \begin{align}\label{contradictArg}
  \vert \hat c_i\vert\ \le\ \sum_{j\ne i} \vert s_{ij}z_j\vert\ \le\  \sum_{j\ne i} \vert s_{ij}z_m \vert\ =\ \vert z_m\vert \cdot \sum_{j\ne i} \vert s_{ij}\vert\ <\ \frac 13 \vert z_m\vert\ .
 \end{align}
 But we also have:
 $$\vert \hat c_m\vert\ =\ \left| z_m -\sum_j s_{mj}z_j\right|\ \ge\ \left| \vert z_m\vert - \sum_j\vert s_{mj}z_m\vert\right|\ \ge\ \frac 13\vert z_m\vert\ .$$
 Together with \eqref{contradictArg} the latter inequality gives $\vert \hat c_m\vert > \vert \hat c_i\vert$ -- which contradicts $\hat c_i\in C_{\max}$ and completes the proof of Theorem \ref{mainThm}.3.

\subsection{Proof of 4.}

The proof is again inductive. The case $n=2$ follows from a straightforward elementary calculation for which we refer to the appendix of \cite{radons2016master}. Now assume the statement of the theorem would hold for an $N\ge 2$, but the tuple $(S,z,\hat c)$ would contradict it in dimension $N+1$. We duplicate the argument from \textit{Case 1} in the proof of \textit{3.}: 
 
 As $N+1 \ge 3$, we can organize the system $w.l.o.g.$ such that $z_{N}\in Z_{\max}$ and $\hat c_{N+1}\not\in C_{\max}$. Then there exists a scalar $\zeta \in \left[0, 1 \right]$ such that 
 
 $$
 \zeta \cdot \vert z_N\vert = \vert z_{N+1}\vert\quad \Longrightarrow\quad \zeta\cdot s_{j,N+1} \cdot \vert z_N\vert = s_{j,N+1}\cdot \vert z_{N+1}\vert.
 $$ 
 Then $$\bar S\ \equiv\ S_{N+1,N+1} + \operatorname{diag}_N(0,\dots, 0, \zeta s_{N,N+1})$$ is still symmetric and tridiagonal with $\lVert S\lVert_{\infty}<1$. And, for $\bar z, \bar c$  defined as in \eqref{defZ} and \eqref{defC}, respectively, we have $\bar z+\bar S\vert \bar z\vert =\bar c$. Thus, the tuple ($\bar S,\bar z, \bar c$) contradicts the induction hypothesis for dimension $N$. 
 
 This completes the proof of the fourth statement and thus of the main theorem.

\section{Signed Gaussian elimination}

\subsection{Preliminaries}
We will show the three bullet points stated in the introduction:
Let $S$ and $\Sigma$ as in \eqref{AVE} and define the following matrix-block partitions: 
\begin{align}\label{blocks}
 \Sigma \equiv \begin{bmatrix} \sigma_1 & 0 \\ 0 & \bar\Sigma \end{bmatrix} \qquad \text{and} \qquad S \equiv \begin{bmatrix} E & F \\ G & H \end{bmatrix},
\end{align}
where $\sigma_1\in \{+1,-1\}$ is the first diagonal entry -- i.e., the first sign -- of $\Sigma$ and $E\equiv s_{11}$. Then the first step of a Gaussian elimination will transform $(I-S\Sigma)$ into

\begin{align*}
 &\begin{bmatrix} 1-\sigma_1s_{11} & -F\bar\Sigma \\ 0 & I-H\bar\Sigma +\sigma_1G(1-\sigma_1s_{11})^{-1}F\bar\Sigma \end{bmatrix}\\
 =&\begin{bmatrix} 1-\sigma_1s_{11} & -F\bar\Sigma \\ 0 & I-\bar S \bar\Sigma \end{bmatrix}, 
\end{align*}
where $\bar S\equiv H -\sigma_1G(1-\sigma_1s_{11})^{-1}F$.

As $\bar\Sigma$ is factored out, all one needs to calculate $\bar S$ and $1-\sigma_1s_{11}$ and thus be able to perform the first elimination step on the system matrix, is to choose a value for $\sigma_1$. 

Moreover, if we denote by $c'$ the updated vector $\hat c$ after one step of Gaussian elimination and define $\bar c\equiv (c'_2, \dots, c'_n)^T\in \mathbb R^{n-1}$, then it is
\begin{align}\label{gaussStep_c}
\bar c_i \equiv \hat c_{i+1} -\frac{\sigma_1\cdot (-s_{i+1,1})}{1-\sigma_1\cdot -s_{11}}\cdot \hat c_1= \hat c_{i+1} +\frac{\sigma_1\cdot s_{i+1,1}}{1-\sigma_1\cdot -s_{11}}\cdot \hat c_1
\end{align}
for all $i\in [n-1]$. And again this transformation can be performed, once $\sigma_1$ is fixed. Hence, one step of Gaussian elimination can be performed on the system \eqref{AVE} if $\sigma_1$ is fixed. 

 For the proper value of $\sigma_1$ we get a correct step. Here by correct we mean that the unique solution $\bar z$ of the reduced system equals the vector $(z_2,\dots ,z_n)^T$, that is, the elimination step is correct if the solution of the reduced system is identical to the last $n-1$ components of the solution $z$ of $(I-S\Sigma)z=\hat c$. One could, of course, also perform an elimination step with the wrong sign-choice. But then the equality of $\bar z$ and $(z_2,\dots ,z_n)^T$ would get lost.
This shows:

\begin{lemma}
If the sign of $z_1$ is known, then one correct step of Gaussian elimination can be performed on system \eqref{AVE}.
\end{lemma}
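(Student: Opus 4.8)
The plan is to assemble the two preliminary computations into a genuine equivalence of linear systems and then invoke the solvability theory of Section~2. The preamble has already shown that, once a value for $\sigma_1$ is fixed, both the reduced system matrix $I-\bar S\bar\Sigma$ (with $\bar S\equiv H-\sigma_1 G(1-\sigma_1 s_{11})^{-1}F$) and the updated right-hand side $\bar c$ from \eqref{gaussStep_c} can be formed explicitly. What remains is to argue that, when $\sigma_1$ is chosen to be $\sign(z_1)$, the resulting reduced system is solved precisely by the tail $(z_2,\dots,z_n)^T$ of the original solution.

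First I would fix $\sigma_1=\sign(z_1)$, so that $\sigma_1$ is exactly the first diagonal entry of the true signature $\Sigma$ satisfying $\Sigma z=\vert z\vert$; with this choice $z$ itself solves $(I-S\Sigma)z=\hat c$. I would then observe that the pivot is harmless: since $\vert s_{11}\vert\le\lVert S\lVert_\infty<1$, we have $1-\sigma_1 s_{11}>0$, so the elimination step is well defined. The step itself is nothing but a sequence of elementary row operations on the augmented matrix $[\,I-S\Sigma\mid\hat c\,]$, which leaves the solution set unchanged; consequently the block computation of the preamble shows that $(z_2,\dots,z_n)^T$ satisfies $(I-\bar S\bar\Sigma)\,\bar z=\bar c$.

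It then remains to show that this reduced system has a unique solution, so that its solution must coincide with $(z_2,\dots,z_n)^T$. Here I would use the Schur-complement determinant identity $\det(I-S\Sigma)=(1-\sigma_1 s_{11})\det(I-\bar S\bar\Sigma)$, which is exactly the factorization produced by the elimination step. Because $\lVert S\lVert_\infty<1$ forces $\rho_0^s(S)<1$ by \eqref{inftyBound}, the equivalence $1.\Leftrightarrow 6.$ of Theorem \ref{unique}. gives $\det(I-S\Sigma)>0$ for the true signature; combined with the positivity of the pivot this yields $\det(I-\bar S\bar\Sigma)>0$, so $I-\bar S\bar\Sigma$ is invertible and $\bar z=(z_2,\dots,z_n)^T$ is its unique solution.

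The main obstacle --- or rather the point most likely to be glossed over --- is the distinction between merely executing an elimination step and executing a \emph{correct} one. For any $\sigma_1$ with $1-\sigma_1 s_{11}\ne 0$ the mechanical reduction goes through; correctness is a statement about which linear system one is reducing. The argument hinges on the fact that only the true sign makes $z$ a solution of $(I-S\Sigma)z=\hat c$ in the first place, so that the row-equivalence carries the true tail into the reduced solution. I expect the invertibility of $I-\bar S\bar\Sigma$ to be the one genuinely nontrivial ingredient, and the determinant factorization above is the clean way to obtain it without re-examining the sign-real spectral radius of $\bar S$ directly.
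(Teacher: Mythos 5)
Your argument is correct, and its core is the paper's own: the lemma's proof in the paper consists precisely of the block computation you cite --- once $\sigma_1$ is fixed, $\bar S$, $1-\sigma_1 s_{11}$ and the updated right-hand side \eqref{gaussStep_c} are computable, and choosing $\sigma_1=\sign(z_1)$ makes the elimination a genuine row operation on the linear system $(I-S\Sigma)z=\hat c$ that $z$ actually satisfies, so the tail $(z_2,\dots,z_n)^T$ solves the reduced system. Where you genuinely differ is the invertibility/uniqueness step. The paper does not argue invertibility inside this lemma at all; it defers it to Lemma \ref{recursion}.1 (the bound $\lVert \bar S\rVert_\infty\le\lVert S\rVert_\infty<1$, whose proof lives in an external reference), which via \eqref{inftyBound} and Theorem \ref{unique} ($1.\Leftrightarrow 3.$) makes the reduced system a uniquely solvable AVE. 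Your route --- the factorization $\det(I-S\Sigma)=(1-\sigma_1 s_{11})\det(I-\bar S\bar\Sigma)$ combined with Theorem \ref{unique} ($1.\Leftrightarrow 6.$) --- is more self-contained and obtains invertibility without any norm estimate on $\bar S$.

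One point to tighten: what your determinant argument delivers is that $(z_2,\dots,z_n)^T$ is the unique solution of the \emph{linear} system $(I-\bar S\bar\Sigma)\bar w=\bar c$ for the one signature $\bar\Sigma$ inherited from $z$. But the reduced system the algorithm recurses on is itself an AVE, $\bar w-\bar S\vert\bar w\vert=\bar c$, and the paper's definition of a correct step refers to \emph{its} unique solution; a priori that AVE could have further solutions in other orthants, which your argument as written does not exclude. The repair is immediate within your own framework: $\bar S$ depends only on $\sigma_1$ and not on $\bar\Sigma$, so the same factorization holds with $\bar\Sigma$ replaced by an arbitrary $\bar\Sigma'\in\operatorname{diag}_{n-1,\sigma}$ and $\Sigma$ by the signature with first entry $\sigma_1$ and lower block $\bar\Sigma'$; hence $\det(I-\bar S\bar\Sigma')>0$ for every $\bar\Sigma'$, and Theorem \ref{unique} ($6.\Rightarrow 3.$) gives unique solvability of the reduced AVE. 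With that one additional observation your proof is complete and in fact reproves, for this purpose, what the paper imports from Lemma \ref{recursion}.1.
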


Furthermore, for $\lVert S\lVert_{\infty} <1$, the diagonal of the matrix $M\equiv I-S\Sigma$ is strictly positive. Since neither row and column pivots, nor multiplication with a signature [recall \eqref{inftyEqual}] change the infinity norm of $S$, this shows:

\begin{lemma}\label{permute}
If $\lVert S\lVert_{\infty}<1$, no row or column pivot in $S$ leads to numerical instabilities in the performance of a Gaussian elimination step on \eqref{AVE}. 
\end{lemma}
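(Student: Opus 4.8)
The plan is to reduce the claim to a single elementary estimate: every diagonal entry of $M\equiv I-S\Sigma$ is bounded away from zero by the fixed constant $1-\lVert S\lVert_\infty>0$. I read \emph{numerical instability} here as the division by a pivot that is zero or close to it, so it suffices to produce a uniform positive lower bound on the pivot under any admissible pivoting choice. The $i$-th diagonal entry of $M$ is $1-\sigma_i s_{ii}$, and since $\vert s_{ii}\vert\le\sum_{j}\vert s_{ij}\vert\le\lVert S\lVert_\infty$ while $\sigma_i\in\{+1,-1\}$, we have
$$1-\sigma_i s_{ii}\ \ge\ 1-\vert s_{ii}\vert\ \ge\ 1-\lVert S\lVert_\infty\ >\ 0,$$
so the whole diagonal is not merely positive but bounded below by a quantity that depends only on $\lVert S\lVert_\infty<1$.

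The second step is to verify that pivoting keeps us inside the class for which this bound holds. A pivot search amounts to selecting a permutation $P$ and passing to $PMP^T=I-(PSP^T)(P\Sigma P^T)$, i.e.\ to a new system matrix of the same form $I-S'\Sigma'$ with $S'\equiv PSP^T$ and $\Sigma'\equiv P\Sigma P^T\in\operatorname{diag}_{n,\sigma}$. A row permutation only reorders the absolute row sums of $S$ and a column permutation only reorders the entries within each row, so neither changes the maximal absolute row sum; together with \eqref{inftyEqual} this yields $\lVert S'\lVert_\infty=\lVert S\lVert_\infty<1$. Applying the estimate above to $S'$ shows that whichever diagonal entry the pivot search promotes to the $(1,1)$ slot is again at least $1-\lVert S\lVert_\infty>0$. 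Consequently the update in \eqref{gaussStep_c} never divides by a near-zero quantity, and the elimination step is stable.

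The point I would be most careful about, and the only genuine obstacle, is the insistence that the pivoting be \emph{symmetric}. A row swap applied to $M=I-S\Sigma$ alone produces $P-PS\Sigma$, which is no longer of the form $I-S'\Sigma'$ and would generally push a small off-diagonal entry of $M$ into the pivot position, destroying the bound; it is the accompanying column swap that returns the chosen entry to the diagonal, where the lower bound $1-\lVert S\lVert_\infty$ is available. Once one commits to this symmetric pivoting—which is in any case what preserves the AVE structure needed to continue the reduction—the two routine observations above close the argument.
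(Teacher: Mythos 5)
Your proof is correct and follows essentially the same route as the paper, which argues in one sentence that the diagonal of $M\equiv I-S\Sigma$ is strictly positive when $\lVert S\lVert_\infty<1$ and that neither row/column pivots nor signature multiplication change $\lVert S\lVert_\infty$ (via \eqref{inftyEqual}). Your write-up merely makes this explicit, adding the uniform lower bound $1-\lVert S\lVert_\infty$ on the pivots and the observation that the permutation must be symmetric to preserve the form $I-S'\Sigma'$ --- a point the paper itself emphasizes immediately after the lemma when it prescribes symmetric (full) pivoting.
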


Thus, we can always, \textit{by symmetric row and column pivoting} (which is also called \textit{full pivoting} in some sources), produce a constellation for the AVE, where $\hat c_1\in C_{\max}$. Then Theorem \ref{mainThm}. provides us with the knowledge of the correct $\sigma_1$, if $S$ conforms to any of the conditions listed in the main result. If we want to perform more than only the first step of a Gaussian elimination applying this principle, we need the constraints to hold for the reduced subsystem(s) as well. The following technical lemma ensures this for all structural restrictions stated in the main theorem. 

\begin{lemma}\label{recursion}
 Let $S \in \operatorname{M}_n(\mathbb R)$ with $\lVert S \lVert_{\infty} = \xi < 1$, and define $\bar S$, $\bar\Sigma$ and $E,F,G,H$ as in \eqref{blocks}. Then the following statements hold:
 \begin{enumerate}
 \item $\lVert \bar S \lVert_{\infty} \le \xi < 1$.
 \item If $S$ is strictly diagonally dominant, then so is $\bar S$. 
 \item If $S$ is symmetric, then so is $\bar S$. 
 \item If $S$ is tridiagonal, then so is $\bar S$.

 \end{enumerate}
 \end{lemma}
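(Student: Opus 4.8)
The plan is to reduce all four claims to a single entrywise description of the Schur complement. Since $\bar\Sigma$ is factored out, I work directly with $\bar S = H - \sigma_1 G(1-\sigma_1 s_{11})^{-1}F$. Writing $\alpha \equiv \sigma_1/(1-\sigma_1 s_{11})$ (a scalar, because $E=s_{11}$ is $1\times 1$), the blocks are $F=(s_{12},\dots,s_{1n})$, $G=(s_{21},\dots,s_{n1})^\top$, $H=(s_{ij})_{2\le i,j\le n}$, so that $\bar s_{ij}=s_{i+1,j+1}-\alpha\,s_{i+1,1}\,s_{1,j+1}$ for $i,j\in[n-1]$; that is, $\bar S$ is the lower-right block $H$ plus the rank-one correction $-\alpha GF$. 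Two elementary facts will be used throughout: first, $|s_{11}|\le\xi<1$ forces $1-\sigma_1 s_{11}\ge 1-|s_{11}|>0$, so $\alpha$ is well defined with $|\alpha|\le(1-|s_{11}|)^{-1}$; second, the $\infty$-norm bound gives $\sum_{k=2}^n|s_{1k}|\le\xi-|s_{11}|$ and $\sum_{k=2}^n|s_{rk}|\le\xi-|s_{r1}|$ for every row $r$.

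Claims $3$ and $4$ fall out immediately from this description. For symmetry, $s_{i1}=s_{1i}$ gives $G=F^\top$, so the correction equals $-\alpha F^\top F$, which is symmetric; together with symmetry of $H$ this yields symmetry of $\bar S$. For the tridiagonal case, tridiagonality makes $F$ and $G$ vanish except in their first entries $s_{12}$ and $s_{21}$, so the outer product $GF$ has a single nonzero entry, in position $(1,1)$; thus the correction only alters the $(1,1)$-entry of $\bar S$, a diagonal entry, while $H$ is already tridiagonal as a contiguous principal submatrix of $S$. Hence $\bar S$ stays tridiagonal.

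For claim $1$ I would bound each row sum of $\bar S$ directly. Fixing the reduced row $i$ and writing $r=i+1$, the triangle inequality gives $\sum_{j}|\bar s_{ij}|\le\sum_{k=2}^n|s_{rk}|+|\alpha|\,|s_{r1}|\sum_{k=2}^n|s_{1k}|$, and inserting the two row-sum facts above this is at most $\xi-|s_{r1}|\bigl(1-|\alpha|(\xi-|s_{11}|)\bigr)$. The proof is then finished by the key inequality $|\alpha|(\xi-|s_{11}|)\le(\xi-|s_{11}|)/(1-|s_{11}|)<1$, which holds since $\xi<1$ and $0\le|s_{11}|\le\xi$; the subtracted term is therefore nonnegative and the row sum is $\le\xi$.

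Claim $2$ is the delicate part and the one I expect to be the main obstacle, since the diagonal entry $\bar s_{ii}$ is itself perturbed, so dominance cannot simply be inherited from $H$. The plan is to apply the reverse triangle inequality to the diagonal, $|\bar s_{ii}|\ge|s_{rr}|-|\alpha|\,|s_{r1}|\,|s_{1r}|$, and the ordinary triangle inequality to the off-diagonal sum, and then to recombine the stray term $|s_{1r}|$ with the off-diagonal part of row $1$ so that the full sum $\sum_{k=2}^n|s_{1k}|$ appears, giving
\[
|\bar s_{ii}|-\sum_{j\ne i}|\bar s_{ij}|\;\ge\;\Bigl(|s_{rr}|-\sum_{k=2,\,k\ne r}^n|s_{rk}|\Bigr)-|\alpha|\,|s_{r1}|\sum_{k=2}^n|s_{1k}|.
\]
Strict diagonal dominance at row $r$ makes the first bracket strictly larger than $|s_{r1}|$, while $|\alpha|\sum_{k=2}^n|s_{1k}|\le(\xi-|s_{11}|)/(1-|s_{11}|)<1$ (again from $\|S\|_\infty<1$) controls the second factor; together they yield the strictly positive lower bound $>|s_{r1}|\bigl(1-|\alpha|\sum_{k=2}^n|s_{1k}|\bigr)\ge 0$, and the subcase $s_{r1}=0$ is covered by the same chain, as the first bracket alone is then already $>0$. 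The one point needing care is that the estimate for $|\alpha|\sum_{k=2}^n|s_{1k}|$ must invoke the norm constraint rather than dominance of row $1$: dominance alone gives only $\sum_{k=2}^n|s_{1k}|<|s_{11}|$, which fails to beat the possibly large factor $|\alpha|$ once $|s_{11}|>\tfrac12$.
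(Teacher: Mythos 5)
Your proposal is correct, and there is nothing in the paper to compare it against: for Lemma \ref{recursion} the paper gives no proof at all, deferring instead to the appendix of the cited master's thesis \cite{radons2016master}. Judged on its own, your entrywise Schur-complement argument is complete. The reduction $\bar s_{ij}=s_{i+1,j+1}-\alpha\,s_{i+1,1}s_{1,j+1}$ with $\alpha=\sigma_1/(1-\sigma_1 s_{11})$ and $|\alpha|\le(1-|s_{11}|)^{-1}$ is exactly right ($1-\sigma_1 s_{11}\ge 1-|s_{11}|>0$ since $|s_{11}|\le\xi<1$), claims 3 and 4 do follow immediately from the rank-one structure of the correction (in the tridiagonal case $GF$ touches only the $(1,1)$-entry of the reduced block, a diagonal position), and the norm estimate in claim 1 closes correctly because $|\alpha|(\xi-|s_{11}|)\le(\xi-|s_{11}|)/(1-|s_{11}|)<1$. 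Your treatment of claim 2 is also sound: regrouping the stray term $|s_{1r}|$ from the perturbed diagonal with the off-diagonal sum to reconstitute $\sum_{k=2}^n|s_{1k}|$, then playing the strict dominance of row $r$ (bracket $>|s_{r1}|$) against $|\alpha|\sum_{k=2}^n|s_{1k}|<1$, yields strict positivity uniformly, including the subcase $s_{r1}=0$. The caveat you flag is a genuine one and is handled correctly: the factor $|\alpha|\sum_{k=2}^n|s_{1k}|$ must be controlled by the hypothesis $\lVert S\rVert_\infty<1$, since dominance of row $1$ alone gives only $\sum_{k=2}^n|s_{1k}|<|s_{11}|$, and $|s_{11}|/(1-|s_{11}|)$ exceeds $1$ once $|s_{11}|>\tfrac12$; this is precisely why part 2 of the lemma cannot be stated without the norm constraint.
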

 The proofs can be looked up in the appendix of \cite{radons2016master}. Note that \textit{4.} holds for arbitrary bandwidths of $S$.

\begin{remark}
It is not hard to find structural restrictions that allow for a loosening of the norm constraints on $S$, while $C_{\max}\cap \Sigma_{\ne}= \emptyset $ holds. The difficulty is that these restrictions have to be invariant under the reduction steps of a Gaussian elimination, which consist of a mere addition of an outer product to the subsystem. For example, antisymmetry will necessarily get lost, as the only antisymmetric outer product is the zero matrix.
\end{remark}
 
\subsection{The algorithm}
The key idea for the signed Gaussian elimination is simple: Pivot the entry in $C_{\max}$ with the smallest index (and the corresponding row and column) to the first position, assume its sign to be correct and set it as the $\sigma_1$ for the first elimination step. Then repeat the procedure for the reduced system and so forth.

Below is a pseudocode for the algorithm. It makes use of the following conventions: 

\begin{itemize}
\item $S, z,\hat c$ and $\Sigma$ are defined as in \eqref{AVE}.
\item $P_{jk}$ denotes the permutation matrix that corresponds to a transposition of $j$ and $k$.
\item $i_{C_{\max}}^{jn}$ denotes the smallest index $i$ with $j\le i\le n$, where $\hat c_i \in C_{\max}$.
\item $GaussStep(A,b,j)$ is the signature of a function that performs the $j$-th step of a Gaussian elimination on a system, where $Ax=b$.
\end{itemize} 
\smallskip

\begin{algorithm}
\caption{ Signed Gaussian elimination}

\begin{algorithmic}[1]
 
\STATE $P=I$

\FOR {$j=1:n$}

\STATE $k=i_{C_{\max}}^{jn}$

\STATE $\sigma_k = 1$

\IF{$\hat c_k<0$}\STATE{$\sigma_k = -1$}\ENDIF

\STATE $S=P_{jk}S P_{jk}$

\STATE $\Sigma=P_{jk}\Sigma P_{jk}$

\STATE $\hat c = \hat c P_{jk}$

\STATE $P=PP_{jk}$

\STATE $(I-S\Sigma,\hat c)=GaussStep(I-S\Sigma,\hat c, j)$

\ENDFOR

\STATE $z=(I-S\Sigma)^{-1}\hat c$

\STATE $z=zP$

\RETURN $z$

\end{algorithmic}

\end{algorithm}

\subsection{Correctness}
With the results gathered so far, the proof of correctness for the conditions described in Theorem \ref{mainThm}. is little more than a formality:

\begin{proposition}\label{SGE}
The SGE computes the unique solution of \eqref{AVE} correctly, if $S$ conforms to any of the conditions described in Theorem \ref{mainThm}. 
 \end{proposition}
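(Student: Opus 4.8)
The plan is to assemble the proof of Proposition \ref{SGE} entirely from the pieces already established, treating it as a verification that the loop invariant of the algorithm is maintained. The central claim to be proven is that at the start of each iteration $j$ of the \textbf{for}-loop, the current reduced system $(I-S\Sigma)z=\hat c$ (in dimension $n-j+1$) still satisfies the hypotheses of Theorem \ref{mainThm}., so that the sign choice made in lines 3--6 is correct. I would state this invariant explicitly and then argue by induction on $j$.

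First I would handle the base case $j=1$: here the system is the original AVE, which satisfies one of the four conditions of Theorem \ref{mainThm}. by hypothesis, and $\lVert S\lVert_\infty<1$ in every case. By Lemma \ref{permute}, the symmetric pivoting performed in lines 7--10 does not alter $\lVert S\lVert_\infty$ (and by \eqref{inftyEqual} neither does the implicit action of the signature), so after pivoting the entry $\hat c_k$ with $k=i_{C_{\max}}^{1n}$ into the first position we have $\hat c_1\in C_{\max}$. Theorem \ref{mainThm}. then guarantees $C_{\max}\cap\Sigma_{\ne}=\emptyset$, which means precisely that $\sign(z_1)=\sign(\hat c_1)$; hence the sign assigned to $\sigma_1$ in lines 4--6 is the correct sign of $z_1$. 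By the first preliminary Lemma, one \emph{correct} step of Gaussian elimination can then be performed, so that the solution $\bar z$ of the reduced system equals $(z_2,\dots,z_n)^T$.

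The inductive step is where Lemma \ref{recursion} does the real work. Assuming the invariant holds entering iteration $j$, the correct elimination step produces a reduced matrix $\bar S$ of dimension one smaller. Part $1.$ of Lemma \ref{recursion} preserves the norm bound $\lVert\bar S\lVert_\infty\le\xi<1$, and parts $2.$--$4.$ preserve strict diagonal dominance, symmetry, and tridiagonality respectively; together with the observation that irreducibility of the relevant submatrix is subsumed (or that conditions $1.$ and $4.$ require no structural heredity beyond the norm bound), this shows the reduced system again meets one of the four conditions. Thus the invariant is restored for iteration $j+1$, and the pivoting-plus-sign-choice argument of the base case applies verbatim to the reduced system. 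I would note the one case that needs a word of care: condition $2.$ (irreducibility with $\lVert S\lVert_\infty\le\tfrac12$), since irreducibility is not listed among the hereditary properties in Lemma \ref{recursion}; here the cleanest route is to observe that after the first reduction the subsystem has $\lVert\bar S\lVert_\infty\le\tfrac12$ and falls under the argument for condition $1.$ once we note that the limiting equality is only needed at the top level, so irreducibility need not propagate.

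Finally I would close the induction: after $n$ iterations the matrix $I-S\Sigma$ has been reduced to upper triangular form with strictly positive diagonal (by Lemma \ref{permute}, the diagonal entries $1-\sigma_i s_{ii}$ stay bounded away from zero throughout, so line 15's back-substitution $z=(I-S\Sigma)^{-1}\hat c$ is well defined), each $\sigma_i$ has been correctly determined, and undoing the accumulated permutation $P$ in line 16 returns the solution in the original ordering. Since Theorem \ref{unique}. guarantees the AVE has a \emph{unique} solution under $\rho_0^s(S)<1$ — which holds because $\rho_0^s(S)\le\lVert S\lVert_\infty<1$ by \eqref{inftyBound} — the vector produced is that unique solution. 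I expect the main obstacle to be the bookkeeping for condition $2.$: making rigorous that the irreducibility hypothesis, which is genuinely used at the top level but is not preserved by reduction, can be safely dropped after the first step because the strict-inequality regime of condition $1.$ takes over for all subsequent reduced systems.
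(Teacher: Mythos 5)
Your overall architecture is exactly the paper's own proof: correctness of the first sign choice from Theorem \ref{mainThm}.\ after symmetric pivoting (Lemma \ref{permute}.\ and \eqref{inftyEqual}), heredity of the hypotheses under elimination via Lemma \ref{recursion}., recursion, and uniqueness from $\rho_0^s(S)\le\lVert S\rVert_\infty<1$ (inequality \eqref{inftyBound} and Theorem \ref{unique}.). For conditions $1.$, $3.$ and $4.$ your loop-invariant write-up is a correct, more explicit rendering of that argument (modulo the small remark, which the paper makes and you omit, that in the tridiagonal case the final $n=1$ subsystem is handled by the trivial fact that $\lvert s_{11}\rvert<1$ forces $\sign(z_1)=\sign(\hat c_1)$).

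The genuine gap is your treatment of condition $2.$, and it sits precisely in the step you flagged as needing ``a word of care.'' Your fallback --- that after one reduction the subsystem ``falls under the argument for condition $1.$'' because ``the limiting equality is only needed at the top level'' --- is false. Condition $1.$ requires $\lVert S\rVert_\infty<\tfrac12$ \emph{strictly}, whereas Lemma \ref{recursion}.1 only gives $\lVert\bar S\rVert_\infty\le\tfrac12$; indeed, any row of $S$ with a zero entry in the eliminated column passes to $\bar S$ with its row sum unchanged, so equality can persist while irreducibility is lost. And for a reducible matrix with norm exactly $\tfrac12$ the conclusion $C_{\max}\cap\Sigma_{\ne}=\emptyset$ can genuinely fail: the paper's example $S=\begin{bmatrix}0&\frac12\\0&\frac12\end{bmatrix}$, $z=(0,1)^T$, $\hat c=(-\frac12,\frac12)^T$ (Proposition \ref{complication}.1) has $\hat c_1\in C_{\max}$ with $\sign(\hat c_1)\ne\sign(z_1)$, and nothing in your invariant excludes such a system from arising as a reduced subsystem. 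So the induction hypothesis, as you state it, cannot be maintained. The missing idea is Proposition \ref{complication}.2: for \emph{arbitrary} $S$ with $\lVert S\rVert_\infty\le\tfrac12$, a sign mismatch at an entry $\hat c_i\in C_{\max}$ forces $z_i=0$, and then either choice of $\sigma_1$ still yields a correct elimination step, since $\sigma_1 z_1=\lvert z_1\rvert$ holds for both signs (solutions on orthant boundaries are computed correctly). With that, after the first step condition $2.$ can be replaced by the weaker but hereditary invariant ``$\lVert S\rVert_\infty\le\tfrac12$, arbitrary structure,'' and the induction closes. In fairness, the paper's own proof of Proposition \ref{SGE} is equally terse here --- it simply asserts heredity via Lemma \ref{recursion}.\ and defers this repair to Section 5 --- but a self-contained proof needs it, and your substitute argument is the one step that would actually fail.
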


\begin{proof}
For all cases we have $\rho_0^s(S)\le \lVert S\lVert_{\infty}<1$, which guarantees the unique solvability of \eqref{AVE} and allows for unproblematic (symmetric) pivoting of rows and columns (Theorem \ref{unique}. and Lemma \ref{permute}.). Theorem \ref{mainThm}. guarantees the correctness of the first sign choice. Lemma \ref{recursion}. assures that the conditions of the theorem are also satisfied by the reduced system. Hence, the argument applies recursively. 

For the tridiagonal case we remark that for $n=1$ and $\lVert S\lVert_{\infty} <1$ we always have $\sign(z_1)=\sign(\hat c_1)$. Hence, the reduction step from a two- to a one-dimensional subsystem is unproblematic with regard to the correctness of the result. Even though for a square matrix of dimension one the notion of tridiagonality clearly makes no sense.
\end{proof}

The proposition shows that, in a way, systems which conform to the conditions of the main theorem behave like dented linear systems rather than fully fledged piecewise linear systems.

\begin{remark}
Let $S$ and $z$ be generated uniformly at random. Then the expected value of $S\vert z\vert$ is the zero vector. This means, even though the infinity norm of $S$ may be arbitrarily large, for $\hat c\equiv z- S\vert z\vert$ the sign of $\hat c_i$ is a maximum likelihood estimate for the sign of $z_i$ for all $i\in [n]$ ($n$ the dimension of the system). So, for the SGE any of the popular testing of algorithms beyond their proven correctness range with randomly generated systems would be a rather pointless exercise: Relevant problem dimensions begin in the thousands, where the law of large numbers makes a false estimate highly unlikely.
\end{remark} 

\subsection{Effect on runtime}\label{cost}
Throughout this analysis we will assume a uniform cost model, i.e., elementary arithmetic operations, as well as reading, writing and comparing a floating point number are all assumed to be in $\mathcal O(1)$. 
It is well known that, within this model, the Gaussian eliminations for dense and tridiagonal matrices have a complexity in $\mathcal O(n^3)$ and $\mathcal O(n)$, respectively. (See, e.g., \cite[p. 752]{corman2007alg}, and \cite[p. 769]{corman2007alg}.)
The SGE has three types of additional operations in comparison to a classical Gaussian elimination \textit{without} pivoting: 
\begin{enumerate}
\item Determining the entry in $C_{\max}$ with the smallest index before every elimination step.
\item Pivoting in $S$ and $\hat c$ before the elimination step.
\item Permuting the entries of the solution into their correct order after the completed backwards substitution.
\end{enumerate}


For dense matrices this means that the SGE has precisely the cost of a Gaussian elimination with symmetric row/column-pivoting, which is roughly $\frac 13n^3$ fused multiply-adds (see the above references). So the SGE for dense matrices has the same asymptotical complexity as the unaltered algorithm. (For a detailed account of the operations of the different types of Gaussian elimination, see, e.g., \cite[pp. 744-752]{corman2007alg}.)  

For tridiagonal systems the second and third point can clearly be handled in $\mathcal O(n)$. However, an analysis of the first point shows that the additional operations increase the asymptotical complexity of the tridiagonal SGE in comparison to the tridiagonal Gaussian elimination:



For simplicity we assume that every elimination step produces no zeros beyond the column that is eliminated. That is, the reduced subsystems stay densely tridiagonal in the sense that the three diagonals have no zero entries. Then in every column there is exactly one nonzero entry below the principal diagonal. Thus, the $i$-th elimination step exclusively affects row $(i+1)$ of $S$ and thus only entry $(i+1)$ of $\hat c$. That is to say: $\hat c_{i+2}$ to $\hat c_n$ remain unaltered. Accordingly, it would be inefficient to run a comparison of all remaining entries of $\hat c$ after each elimination step. We outline a better approach: 

Assume that $\hat c_{i+2}$ to $\hat c_n$ are sorted by absolute value (highest first) before the $i$-th elimination step. The only entry of $\hat c$ updated in the $i$-th step is $\hat c_{i+1}$. Then, to determine the entry between $\hat c_{i+1}$ and $\hat c_n$ with the largest absolute value, one only has to compare $\hat c_{i+1}$ and $\hat c_{i+2}$. The only entry of $\hat c$ updated in the next elimination step  (after swapping $\hat c_{i+1}$ and $\hat c_{i+2}$, if necessary) is $\hat c_{i+2}$. And $\hat c_{i+3}$ to $\hat c_n$ remain sorted by absolute value. Hence, the argument applies recursively. 

Now let $i=1$, i.e., sort $\hat c$ before the first elimination step. Then, afterhand we need only $n-1$ comparisons and at most $n-1$ swaps throughout the elimination, which is clearly in $\mathcal O(n)$. As sorting $n$ floats has the trivial lower bound $\mathcal O(n)$, but is currently not possible with  this efficiency, the overall complexity of determining the proper order for the elimination is bounded from below by the complexity of the utilized sorting algorithm. 

Asymptotically this approach is optimal, since determining the entry of $\hat c$ with the largest absolute value in every step clearly has sorting $\hat c$ once as a lower bound. 


Hence, the tridiagonal SGE is at least as expensive as the algorithm utilized to sort $\hat c$. Since the tridiagonal Gaussian elimination's complexity is in $\mathcal O(n)$, this especially means that the tridiagonal SGE has -- at the present state of research -- a higher asymptotical complexity than the unmodified algorithm. 
(Note that, apart from a higher constant factor, this result holds for any fixed bandwidth of $S$.)

Currently, the asymptotically fastest sorting algorithm for floating point numbers, developed by Han and Thorup in \cite{han2002sort}, has a complexity of $\mathcal O (n\cdot\sqrt{\operatorname{log}\operatorname{log} n})$.  
But this is only a  theoretical performance, since the latter is inefficient for realistic problem dimensions. For an actual application the use of an easily implementable in-place sorting algorithm such as Quicksort with its $\mathcal O(n\cdot \operatorname{log} n)$ average cost (see, e.g., \cite[pp. 143-161]{corman2007alg}) is a far more adequate choice.





\section{Sharpness of the bounds}
For $n=1$  we always have $\mathbf{sign}(z_1)=\mathbf{sign}(\hat c_1)$, if $\lVert S\lVert_{\infty}<1$. So, naturally, we are inclined to ask whether the bounds from Theorem \ref{mainThm}. can be loosened further. 

\begin{proposition}\label{complication}
Let $S\in \operatorname{M}_n(\mathbb R)$ with $\lVert S\lVert_{\infty}\le \frac 12$, and $z,\hat c\in \mathbb R$ s.t. $z+S\vert z\vert=\hat c$. Then for $n\ge 2$ the following holds:
\begin{enumerate}
\item It is possible that there exists a $c_i\in C_{\max}$ such that $\mathbf{sign}(\hat c_i)\ne \mathbf{sign}(z_i)$.
\item If $c_i\in C_{\max}$ and $\mathbf{sign}(\hat c_i)\ne \mathbf{sign}(z_i)$, then $z_i=0$.
\end{enumerate}
\end{proposition}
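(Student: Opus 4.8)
The plan is to treat the two assertions by quite different means: statement 1 is an existence claim, most cleanly settled by a single explicit tuple that attains the norm bound with equality, whereas statement 2 is a structural claim that I would derive from a \emph{boundary} version of the Neumann-series argument behind Lemma \ref{diagDom}. For statement 1 I would exhibit a witness. Taking
\begin{align*}
S \;=\; \begin{bmatrix} 0 & \tfrac12 \\ 0 & \tfrac12 \end{bmatrix}, \qquad z \;=\; \begin{bmatrix} 0 \\ -1 \end{bmatrix},
\end{align*}
one has $\lVert S\lVert_{\infty} = \tfrac12$, and a one-line computation gives $\hat c = z + S|z| = (\tfrac12,\,-\tfrac12)^T$. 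Both components of $\hat c$ have modulus $\tfrac12$, so $\hat c_1 \in C_{\max}$, yet $\sign(\hat c_1) = +1 \neq 0 = \sign(z_1)$; thus $\hat c_1 \in C_{\max}\cap\Sigma_{\ne}$. This shows that the conclusion of Theorem \ref{mainThm} genuinely breaks down once the strict inequality in its first condition is relaxed to $\le$. I would note in passing that this $S$ is reducible, consistent with condition 2, which rescues the boundary case under irreducibility.

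For statement 2 I would pass to the inverse. Let $\Sigma$ be the signature with $\Sigma z = |z|$, so the equation reads $(I + S\Sigma)z = \hat c$; since $\lVert S\Sigma\lVert_{\infty} = \lVert S\lVert_{\infty} \le \tfrac12 < 1$, the matrix $A \equiv (I + S\Sigma)^{-1}$ is given by the Neumann series $A = I + \sum_{k\ge 1}(-S\Sigma)^k$. Reusing only the norm estimate from the proof of Lemma \ref{diagDom}, but now at the boundary, yields $\lVert \sum_{k\ge 1}(-S\Sigma)^k \rVert_{\infty} \le \sum_{k\ge 1} 2^{-k} = 1$. Because the present hypotheses do \emph{not} include irreducibility, I would extract from this only what the bound truly gives, row by row: writing $t_i$ for the $(i,i)$ entry of the correction $\sum_{k\ge 1}(-S\Sigma)^k$ and $r_i = \sum_{j\ne i}|a_{ij}|$ for its off-diagonal row sum, the inequality $|t_i| + r_i \le 1$ forces $a_{ii} = 1 + t_i \ge 1 - |t_i| \ge r_i \ge 0$. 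That is, $A$ is \emph{weakly} diagonally dominant with a \emph{nonnegative} diagonal.

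I would then finish by a direct interval estimate. From $z = A\hat c$, i.e. $z_i = a_{ii}\hat c_i + \sum_{j\ne i} a_{ij}\hat c_j$, together with $\hat c_i \in C_{\max}$ (so $|\hat c_j| \le |\hat c_i|$ for all $j$) and weak dominance,
\begin{align*}
\big| z_i - a_{ii}\hat c_i \big| \;\le\; \sum_{j\ne i}|a_{ij}|\,|\hat c_j| \;\le\; |\hat c_i|\sum_{j\ne i}|a_{ij}| \;\le\; a_{ii}\,|\hat c_i|.
\end{align*}
Since $a_{ii} \ge 0$, this confines $z_i$ to the interval $a_{ii}\big[\hat c_i - |\hat c_i|,\ \hat c_i + |\hat c_i|\big]$, which lies in $[0,\infty)$ when $\hat c_i \ge 0$ and in $(-\infty,0]$ when $\hat c_i \le 0$. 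Hence $\sign(z_i) \in \{0,\sign(\hat c_i)\}$, so a genuine sign mismatch can occur only with $z_i = 0$, which is exactly statement 2. The subcase $\hat c_i = 0$ forces $\hat c = \mathbf 0$ and thus $z = \mathbf 0$ by unique solvability, and is subsumed.

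The hard part is the transition to the boundary. The proof of Lemma \ref{diagDom} leaned on the strict bound together with irreducibility to extract \emph{strict} dominance and a \emph{strictly positive} diagonal, whereas here I can guarantee only weak dominance and must allow the degenerate value $a_{ii} = 0$. I would therefore take care that the interval estimate remains conclusive in that case, where it collapses directly to $z_i = 0$ — which, reassuringly, is again the desired conclusion rather than an obstruction.
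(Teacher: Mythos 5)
Your proposal is correct, and your proof of statement 2 takes a genuinely different route from the paper's. The paper argues directly on the equation itself: fixing $z_j\in Z_{\max}$, it observes $\vert\hat c_j\vert\ge\tfrac12\vert z_j\vert$ and then distinguishes the cases $z_i\in Z_{\max}$ (where the norm bound forces $\hat c_i$ to adopt the sign of $z_i$) and $z_i\notin Z_{\max}$ (where the sign mismatch forces $\vert\hat c_i\vert\le\left\vert e_i^TS\vert z\vert\right\vert\le\tfrac12\vert z_j\vert\le\vert\hat c_j\vert$, and maximality of $\vert\hat c_i\vert$ collapses this chain into equalities, yielding $z_i=0$). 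You instead pass to the inverse $A=(I+S\Sigma)^{-1}$ and prove a boundary version of the paper's own machinery: where the corollary to Lemma \ref{diagDom} gives \emph{strict} diagonal dominance and a \emph{positive} diagonal under $\lVert S\lVert_\infty<\tfrac12$, your Neumann-series estimate at $\lVert S\lVert_\infty\le\tfrac12$ gives \emph{weak} dominance and a \emph{nonnegative} diagonal, and your interval estimate correspondingly weakens the conclusion of Lemma \ref{sufficient} from ``the signs of $z_i$ and $\hat c_i$ coincide'' to ``they coincide or $z_i=0$'', which is exactly statement 2. This is arguably more systematic, since it exhibits the proposition as the degenerate limit of Lemma \ref{sufficient}; moreover, just as the paper remarks that its proof turns into an alternate proof of Theorem \ref{mainThm}.1 once the inequalities are made strict, your argument recovers that case verbatim. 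The paper's computation is more elementary (no inversion, no series) but less reusable. Two minor points: first, your witness for statement 1 covers only $n=2$, while the proposition asserts possibility for every $n\ge2$; this is repaired trivially by padding $S$ and $z$ with zeros (the paper instead repeats the row $(0,\tfrac12,0,\dots,0)$ $n$ times against $z=(0,1,\dots,1)^T$). Second, you work consistently with the stated convention $z+S\vert z\vert=\hat c$, whereas the paper's own proof silently reverts to $z-S\vert z\vert=\hat c$; this is immaterial, since replacing $S$ by $-S$ preserves all hypotheses, but your version matches the statement as written.
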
  
\begin{proof}
\textit{1.:} Let 
 \begin{center}
 $S\equiv\begin{bmatrix}  0 & \frac 12 & 0 & . & 0\\ 0 & \frac 12 & 0 & . & 0 \\ . & . & . & . & .\\0 & \frac 12 & 0 & . & 0\\  0 & \frac 12 & 0 & . & 0\end{bmatrix} \qquad \text{and} \qquad    z\equiv\begin{bmatrix} 0  \\ 1 \\ \dots \\ 1 \\1 \end{bmatrix}$,
\end{center}
then $\hat c=(-\frac 12,\frac 12,\dots, \frac 12)^T$ and we clearly have $c_1 \in C_{\max}$, but $\mathbf{sign}(\hat c_1)\ne \mathbf{sign}(z_1)$.

\textit{2.:}
 We first exclude the following special case from the discussion: If we have $0=\hat c_i\in C_{\max}$, then $\hat c=\mathbf 0$ and thus, by unique solvability of the system, $z$ is the zero vector as well. That is, we cannot have $\mathbf{sign}(\hat c_i)\ne \mathbf{sign}(z_i)$.
 
 Now assume there were a $0\ne \hat c_i \in C_{\max}$ with $\sign(z_i) \ne \sign(\hat c_i)$ and $z_i\ne 0$. 
 Let $z_j\in Z_{\max}$. Since $\lVert S\lVert_{\infty} \le \frac 12$, we have:
  \begin{align}\label{smaller}
 \frac12 \cdot\vert z_j\vert \ge   \left| e_k^TS\vert z\vert \right| \qquad \forall k\in [n].
  \end{align} 
As $\hat c_j = z_j -e_j^TS\vert z\vert$, this especially gives $\vert\hat c_j\vert \ge \frac 12\cdot \vert z_j\vert$. We proceed by a case distinction:
 
\textit{Case 1:} Let $z_i\in Z_{\max}.$ If $z_i$ were zero, then $z$ would be the zero vector and thus $\hat c$ as well. If we had $\vert z_i\vert >0$, then $\hat c_i$ would have to adopt the sign of $z_i$ due to \eqref{smaller}. Hence, we would have $\sign(z_i) = \sign(\hat c_i)$ in contradiction of the initial assumption.

\textit{Case 2:} Let $z_i\not\in Z_{\max}$. Since $\sign(z_i) \ne \sign(\hat c_i)$, but $\hat c_i = z_i - e_i^TS\vert z\vert$, the sign of $\hat c_i$ must be the same as that of $- e_j^TS\vert z\vert$. This gives the leftmost inequality in 
\begin{align*}
\vert \hat c_i \vert \le  \left| e_i^TS\vert z\vert \right| \le \frac 12\cdot \vert z_j\vert\le \vert \hat c_j\vert,
\end{align*}
where $z_j\in Z_{\max}$. Since $\hat c_i\in C_{\max}$, we have $\vert \hat c_i\vert = \vert e_i^TS\vert z\vert\vert  =\vert \hat c_j\vert$ -- and the left equality clearly yields $z_i=0$. 
\end{proof}

The second statement of the proposition makes sure that the SGE calculates the proper solution for arbitrarily structured $S$ with $\lVert S\lVert_{\infty}\le \frac 12$, while the statement of the main theorem does not hold anymore in its strict sense that $z_i=0$ if and only if  $0=\hat c_i\in C_{\max}$. That is, the SGE also computes solutions on orthant boundaries correctly.

Also note that, by replacing the inequalities in \eqref{smaller} with strict inequalities, the proof of Proposition \ref{complication}.2. can be used as an alternate proof for Theorem \ref{mainThm}.1. 

One might ask now, if the SGE still runs provably correct with irreducible $S$ that have a norm greater than one half. We will see below that the answer to this query is no. Accordingly, under purely practical considerations the first two points of Theorem \ref{mainThm}. could be merged into one condition: $S$ arbitrarily structured with $\lVert S\lVert_{\infty}\le \frac 12$.

\begin{proposition}
 For an irreducible $S \in \operatorname{M}_n(\mathbb R)$ with $n\ge 2$, the correctness of the SGE cannot be ensured, if $\lVert S\lVert_{\infty} >\frac 12$.
\end{proposition}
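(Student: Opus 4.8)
The goal is to exhibit, for every $n\ge 2$ and every norm value $\xi\in(\tfrac12,1)$, an irreducible matrix $S$ with $\lVert S\lVert_\infty=\xi$ together with a right-hand side $\hat c$ on which the SGE makes a provably wrong sign choice in its very first step. Since the SGE sets $\sigma_1=\sign(\hat c_k)$ for the smallest-index $k$ with $\hat c_k\in C_{\max}$, and since this choice is correct precisely when $\sign(z_k)=\sign(\hat c_k)$, it suffices to construct a tuple $(S,z,\hat c)$ with $z+S\lvert z\rvert=\hat c$ in which some entry $\hat c_i\in C_{\max}$ has $\sign(z_i)\ne\sign(\hat c_i)$ with $z_i\ne 0$. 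The plan is to perturb the reducible witness from Proposition \ref{complication}.1 into an irreducible one while keeping the offending entry strictly in $C_{\max}$.

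The plan is to start from the reducible example
\begin{align*}
S_0\equiv\begin{bmatrix} 0 & \tfrac12 & 0 & \cdots & 0\\ 0 & \tfrac12 & 0 & \cdots & 0\\ \vdots & \vdots & & & \vdots\\ 0 & \tfrac12 & 0 & \cdots & 0\end{bmatrix},\qquad z\equiv(0,1,\dots,1)^T,
\end{align*}
for which $\hat c=(-\tfrac12,\tfrac12,\dots,\tfrac12)^T$, so $\hat c_1\in C_{\max}$ yet $\sign(\hat c_1)\ne\sign(z_1)$. First I would note that $S_0$ is reducible (its columns other than the second vanish) and that $z_1=0$, so Proposition \ref{complication}.2 applies and prevents this exact tuple from fooling the SGE in the strict sense. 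The fix is twofold: bump the norm slightly above $\tfrac12$ to escape the rigidity forced by Proposition \ref{complication}.2, and add small off-diagonal couplings so that the directed graph of $\lvert S\rvert$ becomes strongly connected. Concretely, I would place a small entry $\varepsilon>0$ in position $(1,3)$ (and, if needed for strong connectivity, further small entries linking the remaining rows into a single cycle), choosing $\varepsilon$ small enough that the perturbed row sums stay equal to the target $\xi>\tfrac12$ and that row $1$ remains the strict maximizer of $\lvert\hat c\rvert$. With $z$ still essentially $(0,\dots)$ perturbed to a small nonzero first entry, the freedom gained by $\xi>\tfrac12$ lets me arrange $z_1\ne 0$ with $\sign(z_1)\ne\sign(\hat c_1)$ while keeping $\hat c_1\in C_{\max}$, which is exactly the configuration Proposition \ref{complication}.2 rules out at $\xi=\tfrac12$ but permits once $\xi$ exceeds $\tfrac12$.

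Having built such a tuple, the correctness argument is immediate: by construction the SGE would pick index $1$ (the smallest index attaining $C_{\max}$), set $\sigma_1=\sign(\hat c_1)$, and thereby select a first sign that disagrees with $\sign(z_1)$; by the discussion preceding Lemma \ref{permute} and the definition of a correct step, this wrong choice destroys the identity between the reduced solution $\bar z$ and $(z_2,\dots,z_n)^T$, so the algorithm cannot return the true $z$. The main obstacle I anticipate is the bookkeeping of the perturbation: I must simultaneously enforce irreducibility (strong connectivity of the graph of $\lvert S\rvert$), the exact norm constraint $\lVert S\rVert_\infty=\xi$, strictness of the maximum so that $\hat c_1$ is the unique smallest-index element of $C_{\max}$ and the SGE is genuinely forced into the bad branch, and the sign disagreement $\sign(z_1)\ne\sign(\hat c_1)$ with $z_1\ne 0$. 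Balancing these—especially confirming that the small couplings do not inadvertently flip $\sign(\hat c_1)$ back to $\sign(z_1)$ or promote some other entry into $C_{\max}$—is where care is required; everything else reduces to a short linear computation of $\hat c=z+S\lvert z\rvert$.
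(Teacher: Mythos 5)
Your strategy---perturb the reducible witness of Proposition \ref{complication}.1 so that the norm exceeds $\tfrac12$, the digraph of $\lvert S\rvert$ becomes strongly connected, and $z_1$ becomes a small nonzero entry whose sign disagrees with that of $\hat c_1\in C_{\max}$---is exactly the idea behind the paper's proof. But as written there is a genuine gap: the decisive assertion, namely that ``the freedom gained by $\xi>\tfrac12$ lets me arrange $z_1\ne 0$ with $\sign(z_1)\ne\sign(\hat c_1)$ while keeping $\hat c_1\in C_{\max}$,'' is precisely the content of the proposition, and you never exhibit a tuple realizing it. For a sharpness statement the counterexample \emph{is} the proof, and feasibility here is not a routine matter: Proposition \ref{complication}.2 shows that at $\lVert S\rVert_\infty=\tfrac12$ your list of constraints \emph{cannot} be satisfied simultaneously, so the claim that they can be balanced just above $\tfrac12$ demands an explicit witness. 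The paper supplies one: for $n=2$ take
$$S\equiv \begin{bmatrix} \frac{\epsilon}{2}& \frac{1+\epsilon}{2}  \\ 0 & \frac 12 \end{bmatrix},\qquad z\equiv\begin{bmatrix} \frac{\epsilon}{2}  \\ 1 \end{bmatrix},$$
so that $\hat c\equiv z-S\vert z\vert$ has $\hat c_1=-\tfrac 12-\tfrac{\epsilon^2}{4}$ and $\hat c_2=\tfrac12$; thus $\hat c_1$ is the \emph{unique} element of $C_{\max}$ while $z_1=\tfrac\epsilon2>0$, so the SGE's first sign choice is wrong at an index where $z_1\ne0$. The quantitative mechanism you left implicit is visible here: the admissible size of $z_1$ is governed by the norm excess, $z_1-s_{11}\vert z_1\vert=\tfrac\epsilon2-\tfrac{\epsilon^2}4<\tfrac\epsilon2=s_{12}\vert z_2\vert-\tfrac12$, which forces $\hat c_1<-\tfrac12$ while every other row keeps $\vert\hat c_i\vert\le\tfrac12$. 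The paper extends this to general $n$ by concentrating row $1$ the same way and filling rows $2,\dots,n$ with entries $\tfrac12$.

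Two further remarks. First, your insistence on genuine strong connectivity is not pedantry but would actually improve on the paper: in both of the paper's examples the first column vanishes off the diagonal, so no path leads back into node $1$ and the matrices are, by the standard definition, \emph{reducible}; your plan of closing a single cycle through all nodes (e.g., a small entry $\delta>0$ in position $(n,1)$, which perturbs the unaffected $\hat c_i$ only by $O(\delta)$ and, for $\delta$ small, destroys none of the strict inequalities above) is exactly the repair the paper itself needs. So completing your bookkeeping is worthwhile, not optional. Second, you have inherited the paper's sign slip in Proposition \ref{complication}: with the convention $\hat c=z+S\vert z\vert$ your base example yields $\hat c_1=+\tfrac12$, not $-\tfrac12$; in an argument whose entire substance is the tracking of signs you must fix one convention (the paper's canonical form is $z-S\vert z\vert=\hat c$) and verify the computation within it.
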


\begin{proof}
 We start by demonstrating the sharpness of the bound for $n= 2$. For an $\epsilon >0$ let 
 
 $$S\equiv \begin{bmatrix} \frac{\epsilon}{2}& \frac{1+\epsilon}{2}  \\ 0 & \frac 12 \end{bmatrix} \qquad \text{and} \qquad    z\equiv\begin{bmatrix} \frac{\epsilon}{2}  \\ 1 \end{bmatrix}.$$
Then, for $\hat c\equiv z-S\vert z\vert$ we have $\hat c=(-\frac{2+\epsilon}{4}, \frac 12)^T$. And clearly $\vert c_1\vert >\vert c_2\vert$, but  $\mathbf{sign}(\hat c_1)\ne \mathbf{sign}(z_1)$. 

 The structure of this example can be extended to higher dimensions. Let
\begin{center}
 $S\equiv\begin{bmatrix}  \frac{\epsilon}{2} & \frac{1+\epsilon}{2} & 0 & . & 0\\ 0 & 0 & 0 & . & \frac 12 \\ . & . & . & . & .\\0 & 0 & \frac 12 & . & 0\\  0 & \frac 12 & 0 & . & 0\end{bmatrix} \qquad \text{and} \qquad    z\equiv\begin{bmatrix} \frac{\epsilon}{2}  \\ 1 \\ \dots \\ 1 \\1 \end{bmatrix}$.
\end{center}
This yields $\hat c=(-\frac{2+\epsilon}{4},\frac 12,\dots, \frac 12)^T$. And again: $c_1 \in C_{\max}$, but $\mathbf{sign}(\hat c_1)\ne \mathbf{sign}(z_1)$. 

As in both cases $S$ is irreducible with $\lVert S\lVert_{\infty} =\frac 12 +\epsilon$, this establishes the sharpness of the bound for $n \ge 2$. 
\end{proof}


In the tridiagonal case the bound is sharp:

\begin{proposition}
Let $S\in \operatorname{M}_n(\mathbb R)$ be tridiagonal and symmetric. If $\lVert S \lVert_{\infty} \ge 1$, then the correctness of the SGE cannot be ensured.    
\end{proposition}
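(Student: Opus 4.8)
The plan is to construct, for each dimension $n\ge 2$, an explicit tuple $(S,z,\hat c)$ with $S$ tridiagonal, symmetric, and $\lVert S\lVert_\infty \ge 1$, such that there is an entry $\hat c_i \in C_{\max}$ with $\mathbf{sign}(\hat c_i)\ne \mathbf{sign}(z_i)$ and $z_i\ne 0$. Since the SGE relies precisely on the guarantee $C_{\max}\cap \Sigma_{\ne}=\emptyset$ to fix the correct first sign $\sigma_1$, producing a single counterexample where this guarantee fails at the maximal entry suffices to show that the algorithm would make an incorrect sign choice, and hence its correctness cannot be ensured.

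First I would treat the base case $n=2$, mirroring the structure of the $n=2$ example in the preceding proposition but now respecting symmetry. The natural candidate is a matrix of the form
\begin{align*}
S\equiv\begin{bmatrix} a & b \\ b & a \end{bmatrix},
\end{align*}
with $b$ chosen so that $\lVert S\lVert_\infty = |a|+|b|\ge 1$, together with a $z$ whose entries have disparate magnitudes. The idea is to pick $z_1$ small (or of one sign) and $z_2$ large, so that in the first row the off-diagonal contribution $b\,|z_2|$ overwhelms the diagonal term $z_1 - a|z_1|$ and flips the sign of $\hat c_1$, while simultaneously arranging that $|\hat c_1|$ is the largest entry of $\hat c$. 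The symmetric analogue of the earlier $\frac\epsilon2$-type construction should do this at the boundary value $\lVert S\lVert_\infty = 1$; I would solve the two sign/magnitude inequalities for concrete numerical values of $a,b,z_1,z_2$.

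Next I would extend the construction to general $n\ge 2$, duplicating the embedding technique used in the previous proposition: append a tridiagonal block whose remaining variables all have modulus $1$ and whose couplings are pure permutation-type entries of absolute value that keep $\lVert S\lVert_\infty\ge 1$ and preserve both symmetry and tridiagonality. The trailing rows should each produce a $\hat c_j$ with $|\hat c_j| \le |\hat c_1|$, so that the offending first entry remains in $C_{\max}$. I would verify directly that $\hat c \equiv z - S|z|$ has the claimed entries and that $\hat c_1\in C_{\max}$ with $\mathbf{sign}(\hat c_1)\ne\mathbf{sign}(z_1)$.

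The main obstacle is the tension between three simultaneous demands on the tridiagonal, symmetric $S$: it must have $\lVert S\lVert_\infty\ge 1$, it must force a genuine sign flip at a maximal-magnitude right-hand-side entry with $z_i\ne 0$, and the flipped entry must still dominate $\hat c$ in absolute value. Symmetry is the real constraint here, since it couples the off-diagonal entry used to flip the sign of $\hat c_1$ back into row $2$, so I must ensure that the resulting $\hat c_2$ (and all subsequent entries from the appended block) does not exceed $|\hat c_1|$; balancing these competing magnitude inequalities at exactly the threshold $\lVert S\lVert_\infty = 1$ is the delicate part, and I expect it to pin down the example essentially uniquely up to scaling and sign.
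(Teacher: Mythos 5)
Your approach is viable and can be completed, but it is a genuinely different — and considerably heavier — route than the paper's own proof. The paper settles the claim with a one-line degenerate example: $S=-I$ is symmetric, tridiagonal, with $\lVert S\rVert_\infty=1$, and $z=(-1,\dots,-1)^T$ gives $\hat c=z+\vert z\vert=\mathbf 0$; the SGE's default then picks $\sigma_1=+1$, so it returns $z=\mathbf 0$ instead of the intended solution. Your plan instead demands a non-degenerate witness, i.e.\ $\hat c_i\in C_{\max}$ with $\sign(\hat c_i)\ne\sign(z_i)$ and $z_i\ne 0$, and such a witness indeed exists, so your strategy goes through. Concretely, for
\begin{align*}
S \equiv \frac 12\begin{bmatrix} 1 & 1 \\ 1 & 1\end{bmatrix}, \qquad z \equiv \begin{bmatrix}\epsilon \\ 1\end{bmatrix}, \qquad 0<\epsilon<1,
\end{align*}
one gets $\hat c = z-S\vert z\vert = \tfrac{1-\epsilon}{2}\,(-1,\,1)^T$, so $\hat c_1\in C_{\max}$ (tied with $\hat c_2$), $\sign(\hat c_1)=-1\ne+1=\sign(z_1)$, and $z_1=\epsilon\ne 0$; since the SGE takes the smallest index in $C_{\max}$, it sets $\sigma_1=-1$ and computes $(0,\,1-\epsilon)^T$ rather than $(\epsilon,\,1)^T$. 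Padding with decoupled diagonal entries (e.g.\ $s_{jj}=1$, $z_j=1$ for $j\ge 3$, all new off-diagonal entries zero) extends this to every $n$ while preserving symmetry and tridiagonality; this is simpler than the ``permutation-type'' couplings you envisage, which tridiagonality would in any case forbid. Three caveats. First, your expectation that the inequalities pin the example down is correct in a strong sense: at exactly $\lVert S\rVert_\infty=1$, with $z=(\epsilon,1)^T$, the two magnitude conditions force $s_{11}=s_{22}=1-s_{12}$ and can only be met with \emph{equality} $\vert\hat c_1\vert=\vert\hat c_2\vert$, so you must accept a tie in $C_{\max}$; a strict maximum with flipped sign requires $\lVert S\rVert_\infty>1$ (e.g.\ $s_{12}=s_{21}=\tfrac12+\delta$). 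Second, in both your example and the paper's, the SGE actually outputs a valid solution of the AVE; the failure consists in the loss of uniqueness at $\lVert S\rVert_\infty\ge1$, so that the algorithm cannot be guaranteed to recover the intended solution — this is precisely the sense of ``correctness cannot be ensured'' in Proposition \ref{SGE}. Third, what your construction buys over the paper's minimal example: it shows that the conclusion of Theorem \ref{mainThm}.4 fails at norm one not merely through the $\hat c=\mathbf 0$ degeneracy but via a genuine sign flip at a nonzero component, which nicely complements the dichotomy of Proposition \ref{complication}.
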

\begin{proof}
 Just consider $S=-I$ and $z$ the vector with entries $-1$. Then $\hat c =z +\vert z\vert=\mathbf 0$ -- and the SGE fails, since it picks $+1$ as $\sigma_1$. 
\end{proof}
On a more general note, keep in mind that for $\lVert S\lVert_{\infty}\ge 1$ the unique solvability cannot be guaranteed anymore, which means that we enter an altogether different problem sphere.

We did not manage to find a counterexample that establishes the absolute sharpness of the bound in Theorem \ref{mainThm}.3. However, we can demonstrate that the norm constraint for strictly diagonally dominant matrices can be loosened at most by a minute amount:

\begin{proposition}
Let $S\in \operatorname{M}_n(\mathbb R)$ be strictly diagonally dominant. If $\lVert S \lVert_{\infty} \ge \frac 23+\frac{1}{3(n+1)}$, then the correctness of the SGE cannot be ensured.   
\end{proposition}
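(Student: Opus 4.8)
The plan is to prove the claim by an explicit counterexample. For the system \eqref{AVE}, written as $\hat c=z-S\vert z\vert$, I will construct, for every $n\ge2$, a strictly diagonally dominant $S$ with $\lVert S\lVert_{\infty}=\frac23+\frac1{3(n+1)}$ and a $z$ with $z_1\ne0$ such that $\hat c_1\in C_{\max}$ while $\sign(\hat c_1)\ne\sign(z_1)$. By construction the SGE fixes $\sigma_1=\sign(\hat c_k)$ for the smallest index $k$ with $\hat c_k\in C_{\max}$ (this is the content of the algorithm and of Proposition \ref{SGE}); here $k=1$, so the opening sign is chosen opposite to $\sign(z_1)$. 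As stressed before Lemma \ref{permute}, an elimination step reproduces $(z_2,\dots,z_n)$ only when $\sigma_1=\sign(z_1)$; with the wrong first sign the reduced system no longer has the tail of $z$ as its solution, and the algorithm cannot return the true $z$. Because enlarging the norm only makes such a configuration easier to realise, settling the boundary value settles the stated inequality.

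Concretely I take
$$z=\left(\tfrac{1}{2n+1},\,1,\dots,1\right)^{\!\top},$$
so that $z_1$ is the unique element of $Z_{\min}$ -- the \textit{Case 2} situation in the proof of Theorem \ref{mainThm}.3, where a lone minimal component cannot be removed by dimension reduction. For $S$ I set $s_{11}=\frac13+\frac1{3(n+1)}$ and $s_{12}=\frac13$, put $s_{mm}=\frac23+\frac1{3(n+1)}$ for every $m\ge2$, and let all other entries vanish. Then $\hat c_m=1-\lVert S\lVert_{\infty}=\frac{n}{3(n+1)}$ for $m\ge2$, while
$$\hat c_1=\tfrac{1}{2n+1}\Bigl(\tfrac23-\tfrac1{3(n+1)}\Bigr)-\tfrac13=\tfrac{1}{2n+1}\cdot\tfrac{2n+1}{3(n+1)}-\tfrac13=\tfrac{1}{3(n+1)}-\tfrac13=-\tfrac{n}{3(n+1)},$$
so all $n$ moduli $\vert\hat c_i\vert$ coincide at $\frac{n}{3(n+1)}$ and only $\hat c_1$ carries the reversed sign.

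The verification is then a checklist. Row $1$ is strictly diagonally dominant since $\vert s_{12}\vert=\frac13<\frac13+\frac1{3(n+1)}=s_{11}$, the remaining rows trivially; every row sum equals $\frac23+\frac1{3(n+1)}$, which is thus $\lVert S\lVert_{\infty}$. All $\vert\hat c_i\vert$ attain the common maximum $\frac{n}{3(n+1)}$, so $C_{\max}$ is the whole index set and its least member is $1$; as $z_1=\frac1{2n+1}\ne0$ and $\sign(\hat c_1)=-1\ne+1=\sign(z_1)$, the failure is genuine and not the orthant-boundary phenomenon permitted by Proposition \ref{complication}.2. The identity $\hat c_1=-(1-\lVert S\lVert_{\infty})$ is exactly the reverse of the strict inequality $\vert\hat c_i\vert<\vert\hat c_m\vert$ that powered the positive result at norm $\frac23$. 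For a strictly larger norm one merely raises $s_{11}$ (or a later diagonal), which keeps strict dominance and the reversed sign while increasing $\lVert S\lVert_{\infty}$.

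The one delicate point -- and where I expect the work to sit -- is the joint calibration of the three strict requirements: strict dominance in row $1$, $z_1\ne0$, and maximality of $\vert\hat c_1\vert$. They compete for the same norm budget, and it is their simultaneous strictness that prevents the bound from collapsing to $\frac23$; the value $\frac1{3(n+1)}$ is simply the diagonal slack at which $z_1=\frac1{2n+1}$ equalises all $\vert\hat c_i\vert$. Indeed the same computation with diagonal excess any $\eta\in(0,\tfrac13)$ and $z_1=\frac{3\eta}{2-3\eta}$ yields the identical tie at norm $\frac23+\eta$; letting $\eta\downarrow0$ keeps the reversed first sign, so the family fails for every $\lVert S\lVert_{\infty}\in(\frac23,1)$. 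Together with the correctness at norm $\frac23$ from Theorem \ref{mainThm}.3 this suggests that the sharp threshold is $\frac23$ itself.
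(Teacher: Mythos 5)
Your proof is correct, and it follows essentially the paper's own strategy: an explicit counterexample whose structure matches the paper's almost entry for entry (first row with diagonal $\frac13+\frac1{3(n+1)}$ and total off-diagonal weight $\frac13$, all other rows purely diagonal with entry $\frac23+\frac1{3(n+1)}$, and $z$ a small positive number followed by ones). The paper merely spreads the off-diagonal weight of row one over all $n-1$ columns, while you concentrate it in $s_{12}$; both choices work equally well. The one substantive difference is how maximality of $\vert\hat c_1\vert$ is arranged. The paper keeps $z_1=\frac{n+1}{2n+1}\epsilon$ free and takes $\epsilon<\frac1{n+1}$, so that $\vert \hat c_1\vert=\frac13(1-\epsilon)$ is a \emph{strict} maximum and the failure is independent of any tie-breaking; you instead calibrate $z_1=\frac1{2n+1}$ so that all $\vert\hat c_i\vert$ tie exactly, and the failure then rests on the SGE's smallest-index rule for $i_{C_{\max}}^{jn}$ --- legitimate for the algorithm as specified, and in any case a genuine violation of $C_{\max}\cap\Sigma_{\ne}=\emptyset$ since $z_1\ne 0$, but slightly less robust than the paper's strict maximum. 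What your route buys is the closing observation, which is strictly stronger than the proposition: the family with diagonal excess $\eta\in(0,\frac13)$ and $z_1=\frac{3\eta}{2-3\eta}$ gives strictly diagonally dominant counterexamples at every norm $\frac23+\eta$, i.e., throughout the gap $\left(\frac23,\ \frac23+\frac1{3(n+1)}\right)$ that the paper's proposition leaves open; this settles the absolute sharpness of the bound $\frac23$ in Theorem \ref{mainThm}.3, which the paper explicitly says it did not manage to establish. (If you want to avoid relying on the tie-break there as well, take $z_1$ slightly below $\frac{3\eta}{2-3\eta}$: then $\vert\hat c_1\vert$ becomes a strict maximum at the same norm, and the counterexample defeats any tie-breaking rule.)
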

\begin{proof}
Let 
 \begin{center}
 $S\equiv\begin{bmatrix}  \frac 13 + \frac{1}{3(n+1)} & \frac{1}{3(n-1)} & \frac{1}{3(n-1)} & . & \frac{1}{3(n-1)}\\ 0 & \frac 23 +\frac{1}{3(n+1)}& 0 & . & 0 \\ 0 & 0 & \frac 23+\frac{1}{3(n+1)} & . & 0\\ . & . & . & . & .\\  0 & 0 & 0 & . & \frac 23+\frac{1}{3(n+1)}\end{bmatrix}\quad \text{and}\quad     z\equiv\begin{bmatrix} \frac{n+1}{2n+1}\cdot \epsilon  \\ 1 \\ 1 \\ \dots \\1 \end{bmatrix}$.
\end{center}
 Then $\hat c=(-\frac 13 \cdot (1-\epsilon), \frac 13 \cdot \frac{n}{n+1}, \dots, \frac 13 \cdot \frac{n}{n+1})^T$. Now choose an $\epsilon>0$ such that $1-\epsilon>\frac{n}{n+1}$. Then $\hat c_1$ has the largest absolute value of all entries in $\hat c$, but $\sign(z_1) \ne \sign(\hat c_1)$ -- even in the strict sense that $\hat c_i <0$, but $z_i>0$. Hence, the first sign choice of the SGE fails.
 \end{proof}
 
 \section{Acknowledgements}
The author is indebted to his teacher Andreas Griewank and his colleague Tom Streubel, upon whose research the findings presented above are built. He also wants to convey his gratitude to Dorothee Schueth, Gestur Olafsson, and Jens Uwe Bernt for the time they invested into discussing and proofreading this work. A special thanks goes to Pascal Lenzner and Katharina Klost for their help with the presentation of the algorithmic section.

 \bibliographystyle{alpha}
\bibliography{directSol}
\end{document}